\numberwithin{figure}{section}
\def\R{{\mathbb R}}
\def\C{{\mathbb C}}
\def\T{{\mathbb T}}
\def\Z{{\mathbb Z}}
\def\s{\vskip 0.25cm\noindent}
\def\e{\varepsilon}
\def\build#1_#2^#3{\mathrel{
\mathop{\kern 0pt#1}\limits_{#2}^{#3}}}
\def\td_#1,#2{\mathrel{\mathop{\build\longrightarrow_{#1\rightarrow #2}^{}}}}
\newtheorem{theorem}{Theorem}[section]
\newtheorem{corollary}{Corollary}
\newtheorem{proposition}{Proposition}
\newtheorem{lemma}{Lemma}
\begin{document}
\title[Effective dynamics for some non linear wave equation]{Effective integrable dynamics for some nonlinear wave equation}
\author{Patrick G\'erard}
\address{Universit\'e Paris-Sud XI, Laboratoire de Math\'ematiques
d'Orsay, CNRS, UMR 8628} \email{{\tt Patrick.Gerard@math.u-psud.fr}}

\author[S. Grellier]{Sandrine Grellier}
\address{MAPMO-UMR 6628,
D\'epartement de Math\'ematiques, Universit\'e d'Orleans, 45067
Orl\'eans Cedex 2, France} \email{{\tt
Sandrine.Grellier@univ-orleans.fr}}

\subjclass[2010]{ 35B34, 35B40, 37K55}

\date{Oct 24, 2011}

\keywords{} 
\begin{abstract}
We consider the following degenerate half wave equation on the one dimensional  torus
$$\quad i\partial _t u-|D|u=|u|^2u, \; u(0,\cdot)=u_0.
$$
We show that, on a large time interval, the solution may be approximated by the solution of a completely integrable system-- the cubic Szeg\"o equation. As a consequence, we prove an instability result for large $H^s$ norms of solutions of this wave equation.\end{abstract}

\maketitle

\section{Introduction}

Let us consider, on the one dimensional torus $\T$, the following``half-wave" equation
\begin{equation}\label{(W)}\quad i\partial _t u-|D|u=|u|^2u, \; u(0,\cdot)=u_0.
\end{equation}
Here $|D|$ denotes the pseudo-differential operator defined by
$$|D|u=\sum |k|u_ke^{ikx}, \quad u=\sum_k u_ke^{ikx}.$$
This equation can be seen as a toy model  for non linear Schr\"odinger equation on degenerate geometries leading to lack of dispersion. For instance, it has the same structure as the cubic non linear Schr\"odinger equation on the Heisenberg group, or associated with the Gru\v{s}in operator. We refer to \cite{GGX} and \cite{GG} for more detail.

We endow $L^2(\T )$ with the symplectic form
$$\omega (u,v)={\rm Im}(u|v)\ .$$
Equation (\ref{(W)}) may be seen as the Hamiltonian system  related to the energy function
$H(u):= \frac 12(|D|u,u)+\frac 14\Vert u\Vert_{L^4}^4$. In particular, $H$ is invariant by the flow which also admits the following conservation laws, 
$$Q(u):=\Vert u\Vert_{L^2}^2,\quad M(u):=(Du\vert u).$$
However, equation (\ref{(W)})  is a non dispersive equation. Indeed, it is equivalent to the system
\begin{equation}\label{system}
 i(\partial _t \pm \partial_x) u_\pm=\Pi_\pm(|u|^2u),\; u_\pm(0,\cdot)=\Pi_\pm(u_0),
 \end{equation}
where $u_\pm=\Pi_\pm(u)$. 
Here, $\Pi_+$ denotes the orthogonal projector from $L^2(\T )$ onto $$L^2_+(\T ):=\{u=\sum_{k\ge 0} u_k e^{ikx},\; (u_{k})_{k\ge 0}\in\ell^2 \}$$ and $\Pi _-:=I-\Pi _+$. 

Though the scaling is $L^2$-critical, the first iteration map of the Duhamel formula
$$u(t)=e^{-it|D|}u_0-i\int_0^t e^{-i(t-\tau)|D|}(|u(\tau)|^2u(\tau))d\tau$$ is not bounded on $H^s$ for $s<\frac 12$.
Indeed, such boundedness would require the inequality
$$\int_0^1\Vert {\rm e}^{-it\vert D\vert }f\Vert _{L^4(\T )}^4\, dt \lesssim \Vert f\Vert _{H^{s/2}}^4.$$
However, testing this inequality on functions localized on positive modes for instance, shows that this fails if $s<\frac 12$  (see the appendix for more detail).

Proceeding as in the case of  the cubic Szeg\"o equation (see \cite{GG}, Theorem 2.1), \begin{equation}\label{Szego}i\partial _tw=\Pi_+(|w|^2w),\end{equation}
one can prove the global existence and uniqueness of solutions of (\ref{(W)}) in $H^s$ for any $s\ge 1/2$. The proof uses in particular the a priori bound  of the $H^{1/2}$-norm provided by the energy conservation law. \s

\begin{proposition}\label{Propo}
Given $u_0\in H^{\frac 12}(\T )$, there exists $u\in C(\R ,H^{\frac 12}(\T ))$ unique such that
$$i\partial _tu-\vert D\vert u=\vert u\vert ^2 u\ ,\ u(0,x)=u_0(x)\ .$$
Moreover if $u_0\in H^s(\T )$ for some $s>\frac 12$, then 
$u\in C(\R , H^s(\T ))$. 
\end{proposition}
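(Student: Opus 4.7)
The plan is to follow the strategy used in \cite{GG} for the cubic Szeg\"o equation. First, for $s>\frac 12$, I construct a local-in-time solution in $H^s$ by applying Banach's fixed point theorem to the Duhamel formulation
$$u(t)=e^{-it|D|}u_0-i\int_0^t e^{-i(t-\tau)|D|}(|u(\tau)|^2u(\tau))\,d\tau,$$
in a small ball of $C([0,T],H^s(\T ))$. This works because $e^{-it|D|}$ is an isometry on every $H^s$ and because $H^s(\T )$ is a Banach algebra for $s>\frac 12$, so that $\||u|^2u\|_{H^s}\lesssim \|u\|_{H^s}^3$. The resulting existence time satisfies $T\gtrsim \|u_0\|_{H^s}^{-2}$.

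To globalize in $H^s$ for $s>\frac 12$, I would combine the a priori control of $\|u(t)\|_{H^{1/2}}$ coming from the conservation laws $Q$ and $H$ with a logarithmic Brezis--Gallouet type inequality
$$\|u\|_{L^\infty}^2\lesssim \|u\|_{H^{1/2}}^2\,\log\!\left(e+\frac{\|u\|_{H^s}}{\|u\|_{H^{1/2}}}\right),$$
and a tame energy estimate $\frac{d}{dt}\|u\|_{H^s}^2\lesssim \|u\|_{L^\infty}^2\,\|u\|_{H^s}^2$ derived from the equation via a commutator bound. Inserting the log inequality and using the uniform $H^{1/2}$ bound, a Gronwall argument yields a double-exponential-in-time bound on $\|u(t)\|_{H^s}$, which precludes blow-up in finite time.

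For $u_0$ merely in $H^{1/2}$, I would approximate by a sequence $u_0^n\in H^s$, $s>\frac 12$, exploit the global solutions $u^n$ produced above, and prove that $(u^n)$ is a Cauchy sequence in $C([-T,T],H^{1/2}(\T ))$ for every $T>0$; the limit then provides both existence and continuity in time at the $H^{1/2}$ level, and a similar estimate applied to the difference of two solutions gives uniqueness. The main obstacle is precisely this difference estimate at the critical Sobolev regularity, where $H^{1/2}(\T )$ is neither a Banach algebra nor embedded in $L^\infty$. It should be handled by evaluating the difference in a weaker norm, reading (\ref{(W)}) via the first-order system (\ref{system}) to exploit its transport structure, and closing the estimate through the same logarithmic inequality combined with the uniform a priori $H^{1/2}$ bound. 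This is exactly the step where the parallel with \cite{GG}, Theorem 2.1, is decisive.
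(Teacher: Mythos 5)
Your outline matches the strategy the paper has in mind---the paper itself gives no proof but refers to Theorem 2.1 of \cite{GG}: Picard iteration in $H^s$, $s>\frac 12$, using that $e^{-it|D|}$ is unitary on $H^s$ and that $H^s(\T)$ is an algebra; globalization by combining the a priori $H^{1/2}$ bound coming from conservation of $Q$ and $H$ with a Brezis--Gallouet inequality and a tame estimate; then an approximation argument to reach $H^{1/2}$. Your steps one and two are complete and correct.

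There is, however, a genuine gap in step three. The Brezis--Gallouet inequality you wrote cannot close the difference estimate at the $H^{1/2}$ level, because the quantity inside its logarithm is $\|u\|_{H^s}/\|u\|_{H^{1/2}}$, and for an approximating sequence with $u_0^n\to u_0$ only in $H^{1/2}$ the $H^s$ norms of the $u^n$ are \emph{not} uniformly bounded: your own step-two Gronwall argument gives only $\|u^n(t)\|_{H^s}\lesssim \exp\!\bigl(\exp(Ct)\,\log\|u_0^n\|_{H^s}\bigr)$, which blows up with $n$. The coefficient $\|u^n\|_{L^\infty}^2+\|u^m\|_{L^\infty}^2$ in the $L^2$-difference estimate therefore diverges. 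What actually closes the argument in \cite{GG} is the Trudinger-type embedding $\|u\|_{L^p(\T)}\lesssim \sqrt p\,\|u\|_{H^{1/2}}$, valid for every $p<\infty$ and needing \emph{only} the uniform $H^{1/2}$ bound; inserting it into the $L^2$ difference estimate, interpolating $\|w\|_{L^{2p'}}$ between $L^2$ and $L^{2p}$, and optimizing in $p$ yields an Osgood (log-Lipschitz) differential inequality of the form $\frac{d}{dt}\|w\|_{L^2}^2\lesssim \|w\|_{L^2}^2\log\bigl(e+\|w\|_{L^2}^{-2}\bigr)$, which is still integrable and gives uniqueness and continuous (not Lipschitz) dependence. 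A second point you do not address: $L^2$-Cauchy plus a uniform $H^{1/2}$ bound gives convergence only in $H^{s'}$ for $s'<\frac 12$. To obtain a limit in $C(\R,H^{1/2})$ one must further use the conserved quantities: weak $H^{1/2}(\T)$ convergence at each time together with $H(u_0^n)\to H(u_0)$ and strong $L^4$ convergence forces $\bigl(|D|u^n(t),u^n(t)\bigr)\to\bigl(|D|u(t),u(t)\bigr)$, upgrading weak to strong convergence in $H^{1/2}$.
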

 Notice that similarly to the cubic Szeg\"o equation, the proof  of Proposition \ref{Propo} provides only bad large time estimates,
$$\Vert u(t)\Vert _{H^s}\lesssim {\rm e}^{{\rm e}^{C_st}}.$$
This naturally leads to the question of the large time behaviour of solutions of (\ref{(W)}).
In order to answer this question, a fundamental issue is  the decoupling of non negative and negative modes in system (\ref{system}). 
Assuming that initial data are small and spectrally localized on non negative modes, a first step in that direction is given by the next simple proposition,  which shows that $u_-(t)$ remains smaller in $H^{1/2}$ uniformly in time.
\begin{proposition}
Assume
$$\Pi _+u_0=u_0=O(\varepsilon )\text{ 
in }H^{\frac 12}(\T).$$ Then, the solution $u$ of (\ref{(W)}) satisfies
$$\sup _{t\in \R }\Vert \Pi _-u(t)\Vert _{H^{\frac 12}}=O(\varepsilon ^2)\ .$$
\end{proposition}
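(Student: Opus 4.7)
The approach is to build a nonnegative conserved quantity that bounds $\|u_-(t)\|_{H^{1/2}}^2$ from above, and to show that this quantity is $O(\varepsilon^4)$ at $t=0$ so that the bound propagates for all time.

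Concretely, I would work with the linear combination
$$J(u) := 2H(u) - M(u) = (|D|u,u) - (Du,u) + \tfrac{1}{2}\|u\|_{L^4}^4,$$
which is conserved as a combination of two conservation laws. Splitting $u = u_+ + u_-$ and using that $|D|$ acts as $D = -i\partial_x$ on the range of $\Pi_+$ and as $-D$ on the range of $\Pi_-$, the cross terms vanish by orthogonality and one obtains
$$J(u) = 2\,\bigl\| |D|^{1/2} u_-\bigr\|_{L^2}^2 + \tfrac{1}{2}\|u\|_{L^4}^4.$$
In particular $J$ is nonnegative and dominates $\||D|^{1/2} u_-\|_{L^2}^2$.

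At $t=0$, by hypothesis $u_{0,-}=0$, so the first term vanishes and
$$J(u_0) = \tfrac{1}{2}\|u_0\|_{L^4}^4 \lesssim \|u_0\|_{H^{1/2}}^4 \lesssim \varepsilon^4,$$
where I use the one-dimensional Sobolev embedding $H^{1/2}(\T)\hookrightarrow L^4(\T)$. Conservation of $J$ then yields, for every $t\in\R$,
$$\bigl\| |D|^{1/2} u_-(t)\bigr\|_{L^2}^2 \le \tfrac{1}{2} J(u(t)) = \tfrac{1}{2} J(u_0) \lesssim \varepsilon^4.$$
Since $u_-(t)$ is spectrally supported on the modes $k\le -1$, the Poincar\'e-type inequality $\|u_-\|_{L^2}\le \||D|^{1/2}u_-\|_{L^2}$ holds, so the two pieces of the $H^{1/2}$ norm are both $O(\varepsilon^2)$ uniformly in $t$, which is the desired estimate.

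There is no real obstacle here: the only slightly delicate point is to verify that the combination $2H-M$ produces exactly $2\||D|^{1/2}u_-\|_{L^2}^2$ plus a nonnegative $L^4$-term (so that one can discard the $L^4$ piece to get a clean upper bound), and to make sure the Sobolev constant in $H^{1/2}\hookrightarrow L^4$ is used only on the initial datum so that the estimate on $u_-$ does not require any smallness assumption on $u(t)$ at later times.
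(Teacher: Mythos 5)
Your proof is correct and follows essentially the same route as the paper: you form the conserved combination $2H-M$ and observe that it equals $2(|D|u_-,u_-)+\tfrac12\|u\|_{L^4}^4$, whereas the paper simply subtracts the energy and momentum identities, which is the same algebra. You are slightly more explicit than the paper in spelling out the Sobolev embedding $H^{1/2}(\T)\hookrightarrow L^4(\T)$ at $t=0$ and the Poincar\'e step that converts $(|D|u_-,u_-)=O(\varepsilon^4)$ into a full $H^{1/2}$ bound on $u_-$, both of which are needed and only implicit in the paper.
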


\begin{proof}

By the energy and momentum conservation laws, we have
\begin{eqnarray*}
(\vert D\vert u, u)+\frac 12 \Vert u\Vert _{L^4}^4&=& (\vert D\vert u_0, u_0)+\frac 12 \Vert u_0\Vert _{L^4}^4\ ,\\
(Du,u)&=&(Du_0,u_0)\ .
\end{eqnarray*}

Substracting these equalities, we get 
$$2(\vert D\vert u_-, u_-)+\frac 12\Vert u\Vert _{L^4}^4=\frac 12 \Vert u_0\Vert _{L^4}^4=O(\varepsilon ^4)\  ,$$
hence
$$\Vert u_-\Vert _{H^{\frac 12}}^2=O(\varepsilon ^4)\ .$$
\end{proof}

This decoupling result suggests to neglect $u_-$ in  system (\ref{system}) and hence to compare the solutions of (\ref{(W)}) to the solutions of 
$$i\partial _t v-Dv=\Pi_+(|v|^2v),
$$
which can be reduced to (\ref{Szego}) by the transformation $v(t,x)=w(t,x-t)$. 

Our main result is the following.
\begin{theorem}\label{Approx}
Let $s>1$ and $u_0=\Pi_+(u_0)\in L^2_+(\T)\cap H^s(\T)$ with $\Vert u_0\Vert_{H^s}=\varepsilon$, $\varepsilon>0$ small enough.
Denote by  $v$  the solution of the cubic Szeg\"o equation
\begin{equation}\label{CauchySzego} i\partial_tv-Dv=\Pi_+(|v|^2v) \ ,\ 
v(0,\cdot)=u_0 .
\end{equation}
Then, for any $\alpha >0$,  there exists  a constant $c=c_\alpha <1$ so that
\begin{equation}\label{approximation}\Vert u(t)-v(t)\Vert_{H^s}=\mathcal O(\varepsilon^{3-\alpha} )\text{ for }t\le \frac {c_{\alpha }}{\varepsilon^2}\log\frac 1\varepsilon\ .\end{equation}
Furthermore, there exists $c>0$ such that 
\begin{equation}\label{estimateLinfty}\forall t\le \frac c{\e ^3}\ ,\ \Vert u(t)\Vert _{L^\infty} =\mathcal O(\e ) \ .
\end{equation}
\end{theorem}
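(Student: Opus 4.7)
The plan is to compare $u_+$ with the Szeg\"o solution $v$ in $H^s$ by a Gronwall argument on $r:=u_+-v$, the essential new ingredient being a normal-form bound on the negative-mode component $u_-$ that is sharper, by a factor $\varepsilon$, than the energy-momentum estimate of Proposition 2. Three ingredients underlie the argument: an a priori $H^s$-bootstrap of $u$ and $v$, the refined pointwise bound $\|u_-(t)\|_{H^s}=O(\varepsilon^3)$ extracted from the non-resonance of the phase driving $u_-$, and a Gronwall estimate for $r$.

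Since $s>1$, $H^s\hookrightarrow L^\infty$ and $H^s$ is a Banach algebra. The $H^s$-energy identities for (\ref{(W)}) and (\ref{CauchySzego}) therefore give $\tfrac{d}{dt}\|u\|_{H^s}\lesssim\|u\|_{L^\infty}^2\|u\|_{H^s}$ and similarly for $v$, so a bootstrap $\|u(t)\|_{H^s},\|v(t)\|_{H^s}\le 2\varepsilon$ combined with Gronwall closes on $[0,c_\alpha\varepsilon^{-2}\log(1/\varepsilon)]$ for $c_\alpha$ small. To estimate $u_-$, start from the Duhamel formula $\widehat{u_-}(k,t)=-i\int_0^te^{ik(t-\tau)}\widehat{|u|^2u}(k,\tau)\,d\tau$ for $k<0$. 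The leading contribution to the integrand comes from $|u_+|^2u_+$ with $u_+\approx e^{-i\tau D}w$ ($w$ evolving slowly under the Szeg\"o nonlinearity), which for triples $k_j\ge 0$ satisfying $k_1+k_2-k_3=k<0$ (non-resonant, since $|k|-|k_1|-|k_2|+|k_3|=-2k\ge 2|k|$) carries the oscillatory factor $e^{-2ik\tau}$. Integration by parts in $\tau$ produces a gain $1/|k|$, yielding
$$\|u_-(t)\|_{H^s}\lesssim \||u|^2u\|_{H^{s-1}}\lesssim \|u\|_{H^s}^3=O(\varepsilon^3),$$
the $u_-$-dependent contributions to $|u|^2u$ being absorbed by a simultaneous bootstrap.

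Setting $r:=u_+-v$, so $r(0)=0$, and subtracting the equations for $u_+$ and $v$ gives
$$i(\partial_t+\partial_x)r=\Pi_+\bigl(|u|^2u-|u_+|^2u_+\bigr)+\Pi_+\bigl(|u_+|^2u_+-|v|^2v\bigr).$$
The first bracket is linear in $u_-$ at leading order, so its $H^s$-norm is $\lesssim\varepsilon^2\|u_-\|_{H^s}=O(\varepsilon^5)$; the second is $\lesssim\varepsilon^2\|r\|_{H^s}$ by $H^s$-algebra. The $H^s$-energy identity (the transport term $\partial_x r$ contributing nothing) and Gronwall yield $\|r(t)\|_{H^s}\le\varepsilon^3(e^{C\varepsilon^2 t}-1)$. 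Choosing $c_\alpha:=\alpha/C$, the bound $\|u-v\|_{H^s}\le\|r\|_{H^s}+\|u_-\|_{H^s}$ gives (\ref{approximation}). For (\ref{estimateLinfty}), Sobolev embedding and (\ref{approximation}) give $\|u(t)\|_{L^\infty}=O(\varepsilon)$ on $[0,c_\alpha\varepsilon^{-2}\log(1/\varepsilon)]$; to extend to $t\le c/\varepsilon^3$, one iterates the comparison on successive windows of this length, using the conservation of $\|u\|_{H^{1/2}}$ together with the identity $H(u)-\tfrac12 M(u)=\|u_-\|_{\dot H^{1/2}}^2+\tfrac14\|u\|_{L^4}^4$ (which by conservation yields $\|u\|_{L^4}=O(\varepsilon)$ uniformly) to control the restart data.

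The principal obstacle is the sharp bound $\|u_-\|_{H^s}=O(\varepsilon^3)$: without it, Gronwall produces only $\|r(t)\|_{H^s}\sim\varepsilon^{2-\alpha}$, insufficient for (\ref{approximation}). Justifying the Fourier-side integration by parts carefully---tracking the corrections due to the slow Szeg\"o drift of $u_+$ and to the $u_-$ contributions to $|u|^2u$---is where most of the technical work lies. The iteration for (\ref{estimateLinfty}) on the longer time $c/\varepsilon^3$ is also delicate, as one must prevent the accumulated drift from breaking the bootstrap; a modified-energy or normal-form argument eliminating the non-resonant quintic contribution to $\tfrac{d}{dt}H(u)$ may be needed to reach the full time scale cleanly.
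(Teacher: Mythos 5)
Your overall plan---treat the negative-mode component as a non-resonant correction, gain a factor of $\varepsilon$ via integration by parts in the phase, and Gronwall the difference $r=u_+-v$---is morally the Poincar\'e--Birkhoff normal form that the paper carries out (the coefficients $i/(4\Phi)$ of the generating function $F$ in the paper are exactly the $1/\Phi$ you produce by integrating by parts). But there is a genuine gap at the very first step that blocks the whole argument.

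The bootstrap $\Vert u(t)\Vert_{H^s},\Vert v(t)\Vert_{H^s}\le 2\varepsilon$ does not close on the time interval $[0,\,c_\alpha\varepsilon^{-2}\log(1/\varepsilon)]$ for any fixed $c_\alpha>0$. Gronwall with the rate $\Vert u\Vert_{L^\infty}^2\lesssim\varepsilon^2$ only gives $\Vert u(t)\Vert_{H^s}\le \varepsilon\,e^{C\varepsilon^2 t}=\varepsilon^{1-Cc_\alpha}$, which for small $\varepsilon$ exceeds $2\varepsilon$; and this is not an artifact of the method: Corollary \ref{coro} precisely exhibits solutions $v$ of (\ref{CauchySzego}) whose $H^s$-norm grows by a factor $(\log(1/\varepsilon))^{2s-1}$ on this time scale, so the $H^s$-norm of $u$ cannot stay $O(\varepsilon)$. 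Once you allow $\Vert u\Vert_{H^s}$ to grow to some $N(\varepsilon)=\varepsilon^{1-\beta}$, feeding $\Vert u\Vert_{L^\infty}\lesssim N(\varepsilon)$ back into the $H^s$-energy identity makes the Gronwall rate $\varepsilon^{2-2\beta}$, which on time $\varepsilon^{-2}\log(1/\varepsilon)$ is super-exponential; and the product estimate $\Vert\Pi_+(|u_+|^2u_+-|v|^2v)\Vert_{H^s}\lesssim\Vert u\Vert_{H^s}^2\Vert r\Vert_{H^s}$ then carries the same loss. What is missing is an $L^\infty$ (in fact $B^1$) bound on $u$ that is $O(\varepsilon)$ \emph{independently} of the $H^s$ growth. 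This is exactly what the paper's Lax-pair argument supplies: after the normal form, $\tilde u_+$ obeys the Szeg\"o equation up to $O(\varepsilon^4)$, and the trace norm of the Hankel operator $H_{\tilde u_+}$---equivalent to $\Vert \tilde u_+\Vert_{B^1}$ by Peller's theorem---is preserved by the Szeg\"o flow to this order. With $\Vert u\Vert_{B^1}\lesssim\varepsilon$ in hand, the paper uses the refined bilinear estimate $\Vert\Pi_+(|u|^2u-|v|^2v)\Vert_{H^s}\lesssim(\Vert u\Vert_{B^1}^2+\Vert v\Vert_{B^1}^2)\Vert u-v\Vert_{H^s}+\dots$, which lets $\Vert u\Vert_{H^s}$ grow without poisoning the Gronwall rate. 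Nothing in your proposal plays this role, so the $H^s$ comparison is not actually obtained.

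The same issue sinks your treatment of (\ref{estimateLinfty}). Iterating the $H^s$ comparison on windows of length $\varepsilon^{-2}\log(1/\varepsilon)$ compounds the $H^s$ growth per window, and the conserved quantity $\Vert u\Vert_{L^4}=O(\varepsilon)$ is useless for $L^\infty$. In the paper, the $L^\infty$ bound on $[0,c/\varepsilon^3]$ is a single, direct estimate: the $B^1$-norm of $\tilde u_+$ drifts only by $\varepsilon^4 t\Vert\tilde u\Vert_{B^1}^5$ thanks to the Lax pair, and the $B^1$-norm of $\tilde u_-$ stays $o(\varepsilon)$ by a bootstrap on the non-resonant equation for $\tilde u_-$, both up to $t\ll\varepsilon^{-3}$. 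Your ``modified energy eliminating the quintic contribution'' is the right instinct, but the concrete mechanism you need is the conjugation $W(t)^*H_{\tilde u_+(t)}W(t)=H_{\tilde u_+(0)}+O(\varepsilon^4 t)$ coming from the Lax pair (\ref{laxpair}), together with Peller's identification of the trace norm with $B^1$.

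A secondary remark: your non-resonant gain for $u_-$ must also be proven in a norm that controls $L^\infty$ uniformly in time (the paper does it in $B^1$), not merely in $H^s$, since it feeds back into the $L^\infty$ control described above.
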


Theorem \ref{Approx} calls for several remarks. Firstly,
if we rescale $u$ as $\e u$, equation (\ref{(W)})  becomes
$$\quad i\partial _t u-|D|u=\e ^2|u|^2u, \; u(0,\cdot)=u_0$$
with $\Vert u_0\Vert _{H^s}=1 $. On the latter equation, it is easy to prove that $u(t)=e^{-it|D|}u_0+o(1)$ for $t<\!\!<\frac 1{\e^2}$, so that non linear effects only start for $\frac 1{\e^2}\lesssim t$. Rescaling $v$ as $\e v$ in equation (\ref{CauchySzego}), Theorem \ref{Approx} states that the cubic Szeg\"o dynamics appear as  the effective dynamics of equation (\ref{(W)}) on a time interval where  non linear effects are taken into account.
\s
 Secondly, as pointed out before, (\ref{CauchySzego}) reduces to (\ref{Szego}) by a simple Galilean transformation. Equation (\ref{Szego}) has been studied in \cite{GGX}, \cite{GG} and \cite{GG2} where its complete integrability is established together with an explicit formula for its generic solutions. Consequently, the first part of Theorem \ref{Approx} provides an accurate description of solutions of equation (\ref{(W)}) for a reasonably large time.
Moreover, the second part of Theorem \ref{Approx} claims an $L^\infty$ bound for the solution of (\ref{(W)}) on an even larger time. This latter bound is closely related to a special conservation law of equation (\ref{Szego}), namely, some Besov norm of $v$ --see section \ref{LaxPair} below. 
\s
Our next observation is that, in the case of small Cauchy data localized on non negatives modes, system (\ref{system})  can be reformulated as a --- singular --- perturbation of the cubic Szeg\"o equation (\ref{Szego}). Indeed, write $u_0=\e w_0$ and $u(t,x)=\e w(\e^2t, x-t)$, then $w=w_++w_-$ solves the system
\begin{equation}\label{perturb}\left\{\begin{array}{ll}
i\partial_t w_+&=\Pi_+(|w|^2w)\\
i(\e^2\partial _t-2\partial_x)w_-&=\e^2 \Pi_-(|w|^2w)\end{array}\right.\end{equation}
Notice that, for $\e =0$ and $\Pi_+w_0=w_0$, the solution of this system is exactly the solution of (\ref{Szego}). It is therefore natural to
ask how much, for $\e >0$ small, the solution of system (\ref{perturb}) stays close to the solution of equation (\ref{Szego}). Since equation (\ref{Szego}) turns out to be completely integrable, this problem appears as a perturbation of a completely integrable infinite dimensional system.  There is a lot of literature on this subject (see e.g. the books  by Kuksin \cite{K}, Craig \cite{C} and Kappeler--P\"oschel \cite{KP} for the KAM theory). In the case of the 1D cubic NLS equation and of the modified KdV equation, with special initial data such as solitons or 2-solitons, we refer to recent papers by Holmer-Zworski (\cite{HZ1}, \cite{HZ2}), Holmer-Marzuola-Zworski (\cite{HMZ}), Holmer-Perelman-Zworski (\cite{HPZ}) and to references therein. Here we emphasize that our perturbation is more singular and that we deal with general Cauchy data.
\s
Finally, let us mention that the proof of Theorem \ref{Approx} is based on a Poincar{\'e}-Birkhoff normal form approach, similarly to \cite{B} and \cite{G} for instance. More specifically, we prove that equation (\ref{CauchySzego}) turns out to be a Poincar{\'e}-Birkhoff normal form of equation (\ref{(W)}), for small initial data with only non negative modes.

As a corollary of Theorem \ref{Approx}, we get the following instability result.
\begin{corollary}\label{coro}
Let $s>1$. There exists a sequence of data $u_0^n$ and a sequence of times $t^{(n)}$
such that, for any $r$,
$$\Vert u_0^n\Vert _{H^r}\rightarrow 0$$
while the corresponding solution of (\ref{(W)}) satisfies
$$\Vert u^n(t^{(n)})\Vert _{H^s}\simeq \Vert u_0^n\Vert _{H^s}\, \left (\log \frac 1{\Vert u_0^n\Vert _{H^s}}\right )^{2s-1}\ .$$
\end{corollary}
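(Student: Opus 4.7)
The plan is to deduce the corollary from Theorem \ref{Approx} combined with a sharp Sobolev norm growth (``turbulence'') result for the cubic Szeg\H{o} equation. The starting ingredient is the existence of a smooth $\phi_0\in L^2_+(\T)$ and a sequence $\tau_n\to \infty$ such that the solution $w$ of \eqref{Szego} with $w(0)=\phi_0$ satisfies
$$\Vert w(\tau_n)\Vert _{H^s}\simeq \tau_n^{2s-1}\Vert \phi_0\Vert _{H^s}.$$
Such growth is provided by the complete integrability of the Szeg\H{o} equation and the explicit formula for generic solutions established in \cite{GG2}; I take it as the external input. The exponent $2s-1$ in the statement comes precisely from this rate.

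Given $\phi_0$, I would set $u_0^n:=\e_n\phi_0$ with $\e_n\to 0$. Since $\phi_0$ is smooth, $\Vert u_0^n\Vert _{H^r}=\e_n\Vert \phi_0\Vert _{H^r}\to 0$ for every $r$. Because \eqref{Szego} is invariant under the cubic scaling $w(t,x)\mapsto \lambda w(\lambda ^2 t,x)$, and since \eqref{CauchySzego} is related to \eqref{Szego} by the isometric (in $H^s$) Galilean change of variable $v(t,x)=w(t,x-t)$, the solution $v^n$ of \eqref{CauchySzego} with initial datum $u_0^n$ satisfies
$$\Vert v^n(t)\Vert _{H^s}=\e_n\Vert w(\e_n^2 t)\Vert _{H^s}.$$

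Fix $\alpha \in (0,2)$ and let $c_\alpha $ be the constant from Theorem \ref{Approx}. Choose (passing to a subsequence if necessary so that the turbulent sequence $(\tau_n)$ is dense enough) $\tau_n$ with $\tau_n\simeq \log(1/\e_n)$ and $\tau_n\le c_\alpha \log(1/\e_n)$, and set $t^{(n)}:=\tau_n/\e_n^2$. Then $t^{(n)}\le \tfrac{c_\alpha }{\e_n^2}\log(1/\e_n)$, so Theorem \ref{Approx} applies and yields
$$\Vert u^n(t^{(n)})-v^n(t^{(n)})\Vert _{H^s}=\mathcal O(\e_n^{3-\alpha }).$$
On the other hand, the scaling identity together with the turbulence result gives
$$\Vert v^n(t^{(n)})\Vert _{H^s}=\e_n\Vert w(\tau_n)\Vert _{H^s}\simeq \e_n\tau_n^{2s-1}\simeq \e_n\bigl(\log(1/\e_n)\bigr)^{2s-1}.$$
Since $\alpha <2$, the error $\e_n^{3-\alpha }$ is negligible compared with $\e_n(\log(1/\e_n))^{2s-1}$, so the triangle inequality yields both the upper and lower bounds
$$\Vert u^n(t^{(n)})\Vert _{H^s}\simeq \e_n\bigl(\log(1/\e_n)\bigr)^{2s-1}\simeq \Vert u_0^n\Vert _{H^s}\left (\log \frac 1{\Vert u_0^n\Vert _{H^s}}\right )^{2s-1},$$
which is the claim.

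The main obstacle lies entirely in securing the Szeg\H{o} turbulence input, i.e.\ exhibiting $\phi_0$ and a sufficiently dense sequence of times $\tau_n$ on which $\Vert w(\tau_n)\Vert _{H^s}$ is \emph{comparable} to $\tau_n^{2s-1}$. The lower bound is the essential instability content; the matching upper bound is needed only to make the $\simeq $ in the statement symmetric and should follow from the same explicit formula for the chosen solution. Everything else--the scaling, the application of Theorem \ref{Approx}, and the choice of the time window $t^{(n)}\sim \log(1/\e_n)/\e_n^2$--is bookkeeping that precisely uses the available approximation window.
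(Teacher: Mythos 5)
Your bookkeeping -- the rescaling $v^n(t,x)=\e_n w(\e_n^2 t,x-t)$, the choice of $\e_n$ so that $\tau_n\simeq c_\alpha\log(1/\e_n)$, and the triangle inequality absorbing the $\mathcal O(\e_n^{3-\alpha})$ error -- is all correct and matches what the paper does at the end. The problem is the ``external input'' you start from, and it is not a small one.

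You posit a single smooth $\phi_0\in L^2_+(\T)$ and a sequence $\tau_n\to\infty$ with $\Vert w(\tau_n)\Vert_{H^s}\simeq \tau_n^{2s-1}\Vert\phi_0\Vert_{H^s}$ for the Szeg\H{o} flow, and you attribute it to \cite{GG2}. That reference does not contain such a statement: it constructs invariant tori, i.e.\ quasi-periodic and hence $H^s$-bounded trajectories. The growth result invoked in \cite{GG}, Section~6, is of a fundamentally different form: it is built from the \emph{two-parameter family} of explicit initial data $w(0,x)=\e(e^{ix}+\delta)$. For each fixed $\delta>0$ the corresponding Szeg\H{o} solution is periodic and its $H^s$ norm is bounded; what grows is the \emph{maximal amplitude over one period}, namely $\simeq \e/\delta^{2s-1}$ attained at $t\simeq \pi/(2\e^2\delta)$, as $\delta\to 0$. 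The paper's construction therefore takes $u_0^n=\e_n(e^{ix}+\delta_n)$ with $\e_n=1/n$ and $\delta_n=C/\log n$ tending to zero \emph{together}; in particular $u_0^n/\e_n$ changes shape with $n$, whereas your $u_0^n/\e_n=\phi_0$ is fixed. Replacing a shrinking family of periodic orbits by a single trajectory whose $H^s$ norm grows polynomially in time along a subsequence would be a genuinely stronger and, at the time of the paper, open statement; it certainly does not follow from the explicit formulas you cite. Without it, the rest of your argument has nothing to run on.

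To repair the proof one should follow the paper: take $u_0^n=\e_n(e^{ix}+\delta_n)$, use the explicit ODE system solved in \cite{GG} to get $\Vert w_{\e_n}(\pi/(2\e_n^2\delta_n))\Vert_{H^s}\simeq \e_n/\delta_n^{2s-1}$, then tune $\delta_n\simeq C/\log(1/\e_n)$ so that $t^{(n)}=\pi/(2\e_n^2\delta_n)$ fits inside the window $t\le \tfrac{c_\alpha}{\e_n^2}\log(1/\e_n)$ of Theorem~\ref{Approx}, and finally transfer the estimate from $v$ to $u$ as you already do. The exponent $2s-1$ then comes from the blow-up of the rational solution's pole towards the unit circle, not from a time-power rate for a fixed datum.
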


It is interesting to compare this result to what is known about cubic NLS. 
In the one dimensional case, the cubic NLS is integrable \cite{ZS} and admits an infinite number of conservation laws which control the regularity of the solution in Sobolev spaces. As a consequence, no such norm inflation occurs.  This is in contrast with the 
2D cubic NLS case for which  Colliander, Keel, Staffilani, Takaoka, Tao exhibited in \cite{CKST} small initial data in $H^s$ which give rise to large $H^s$ solutions after a large time.

In our case, the situation is different. Although the cubic Szeg\"o equation is completely integrable, its conservation laws do not control the regularity of the solutions, which allows a large time behavior similar to the one proved in \cite{CKST} for 2D cubic NLS (see \cite{GG} section 6, corollary 5). Unfortunately, the time interval on which the approximation (\ref{approximation}) holds does not allow to infer large solutions for (\ref{(W)}), but only solutions with large relative size with respect to their Cauchy data --see section \ref{Corollary} below. A time interval of the form $[0,\frac 1{\e^{2+\beta}}]$ for some $\beta>0$ would be enough to construct large solutions for (\ref{(W)}) for some $H^s$-norms.
\s 
We close this introduction by mentioning that O. Pocovnicu solved a similar problem for equation (\ref{(W)}) on the line by using the renormalization group method instead of the Poincar\'e-Birkhoff normal form method. Moreover, she improved  the approximation in Theorem \ref{Approx} by introducing a quintic correction to the Szeg\"o cubic equation \cite{P}.
\s
The paper is organized as follows. In section \ref{LaxPair} we recall some basic facts about the Lax pair structure for the cubic Szeg\" o equation (\ref{Szego}). In section \ref{Corollary}, we deduce Corollary \ref{coro}  from Theorem \ref{Approx}.
Finally, the proof of Theorem \ref{Approx} is given in section 4. 
\s

\section{The Lax pair for the cubic Szeg\"o equation and some of its consequences}\label{LaxPair}
In this section, we recall some basic facts about equation (\ref{Szego}) (see \cite{GG} for more detail).
Given $w\in H^{1/2}(\T)$, we define (see {\it e.g.} Peller
\cite{Pe}, Nikolskii \cite{N}), the Hankel operator of symbol $w$ by
$$H_w(h)=\Pi_+ (w\overline h)\ ,\ h\in L^2_+\ .$$
It is easy to check that $H_w$ is a $\C $ -antilinear  Hilbert-Schmidt operator.
 In \cite{GG}, we proved that the cubic Szeg\"o flow admits a Lax pair in the following sense. For simplicity let us restrict ourselves to the case of $H^s$ solutions of (\ref{Szego}) for $s>\frac 12$. From \cite{GG} Theorem 3.1, there exists  a mapping $w\in H^s\mapsto B_w$, valued into $\C $-linear bounded skew--symmetric
operators on $L^2_+$, such that 
\begin{equation}\label{laxpair}
H_{-i\Pi_+(|w|^2w)}=[B_w,H_w]\ .
\end{equation}
Moreover, $$B_w=\frac i2 H^2_w-iT_{|w|^2}\ ,$$ where $T_b$ denotes the Toeplitz operator of symbol $b$ given by $T_b(h)=\Pi_+(bh)$.
Consequently, $w$ is a solution of (\ref{Szego}) if
and only if
\begin{equation}\label{laxpair2}
\frac d{dt} H_w=[B_w,H_w]\ .
\end{equation}

An important consequence of this structure  is that the cubic Szeg\"o equation admits an infinite number of conservation laws. Indeed, denoting $W(t)$ the solution of the operator equation
$$\frac {d}{dt}W=B_wW\ ,\ W(0)=I\ ,$$ the operator $W(t)$ is unitary for every $t$, and 
$$W(t)^*H_{w(t)}W(t)=H_{w(0)}.$$ 
Hence, if $w$ is a solution of (\ref{Szego}),
then $H_{w(t)}$ is unitarily equivalent to $H_{w(0)}$. Consequently, the spectrum of 
the $\C $-linear positive self adjoint trace class operator $H_w^2$ is conserved by the evolution. 
In particular, the trace norm of $H_w$ is conserved by the flow. A theorem by Peller, see \cite{Pe}, Theorem 2, p. 454, states that the trace norm of a Hankel operator $H_w$ is equivalent to the norm of $w$ in the Besov space $B^1_{1,1}(\T )$.
Recall that the Besov space $B^1=B^1_{1,1}(\T )$ is  defined as the set of functions $w$ so that $\Vert w\Vert_{B^1_{1,1}}$ is finite where $$
\Vert w\Vert_{B^1_{1,1}}=\Vert S_0(w)\Vert_{L^1}+\sum_{j=0}^\infty 2^j\Vert \Delta_j w\Vert_{L^1},$$
here $w=S_0(w)+\sum_{j=0}^\infty \Delta_j w$ stands for the Littlewood-Paley decomposition of $w$. It is standard that $B^1$ is an algebra included into $L^\infty $ (in fact into the Wiener algebra). The conservation of the trace norm of $H_w$ therefore provides an $L^\infty $ estimate for solutions of (\ref{Szego}) with initial data in $B^1$.
\s
The space $B^1$ and formula (\ref{laxpair}) will play an important role in the proof of Theorem \ref{Approx}. In particular, the last part  will follow from the fact that $\Vert u(t)\Vert_{B^1}$ remains bounded by $\e$ for $t<\!\!<\frac 1{\e^3}$.
The fact that $H^s(\T)\subset B^1$ for $s>1$, explains why we assume $s>1$ in the statement.

\section{Proof of Corollary \ref{coro}}\label{Corollary}

As observed in \cite{GG}, section 6.1, Proposition 7, and section 6.2, Corollary 5, the equation 
$$
i\partial _tw=\Pi _+(\vert w\vert ^2w)\  ,\ w(0,x)=\frac{a_0\, {\rm e}^{ix}+b_0}{1-p_0{\rm e}^{ix}}$$
with $a_0,b_0,p_0\in \C , \vert p_0\vert <1$
can be  solved as
$$w(t,x)=\frac{a(t)\, {\rm e}^{ix}+b(t)}{1-p(t){\rm e}^{ix}}$$
where $a,b,p$ satisfy an ODE system explicitly solvable.

In the particular case when
$$a_0=\e \ ,\ b_0=\e \delta \ ,\ p_0=0\ ,\ w_\e(0,x)=\e ({\rm e}^{ix}+\delta )\  ,$$
one finds
$$1-\left \vert p \left (\frac \pi {2\e ^2\delta }\right )\right \vert ^2\simeq \delta ^2\ ,$$
so that, for $s>\frac 12$,
$$\left \Vert w_\e\left (\frac \pi {2\e ^2\delta }\right )\right \Vert _{H^s}\simeq \frac \varepsilon {\delta ^{2s-1}}\ .$$

Let $v_\e$ be the solution of $$
i(\partial _t+\partial _x)v_\e=\Pi _+(\vert v_\e\vert ^2v_\e)\  ,\ v_\e(0,x)=\e ({\rm e}^{ix}+\delta )$$
then $v_\e(t,x)=w_\e (t,x-t)$ so that 
$$\left \Vert v_\e\left (\frac \pi {2\e ^2\delta }\right )\right \Vert _{H^s}\simeq \frac \varepsilon {\delta ^{2s-1}}\ .$$
Choose
 $$\varepsilon=\frac 1n\ ,\ \delta =\frac {C}{\log n}\ $$ 
 with $C$ large enough so that if $t^{(n)}:=\frac \pi {2\e ^2\delta }$ then $t^{(n)}<c \frac{\log (1/\e )}{\e ^2}$, where $c=c_\alpha$ in Theorem \ref{Approx} for $\alpha=1$, say.
 Denote by $u_0^n:=v_\e(0,\cdot)$. 
 As $\Vert u_0^n\Vert_{H^s}\simeq \e$, the previous estimate reads 
 $$\left \Vert v_\e\left (\frac \pi {2\e ^2\delta }\right )\right \Vert _{H^s}\simeq \Vert u_0^n\Vert _{H^s}\, \left (\log \frac 1{\Vert u_0^n\Vert _{H^s}}\right )^{2s-1}\ .$$
 Applying Theorem \ref{Approx}, we get the same information about $\Vert u_n(t^{(n)})\Vert_{H^s}$.
 \s

\section{Proof of Theorem \ref{Approx}}
First of all, we rescale $u$ as $\e u$ so that equation (\ref{(W)}) becomes
\begin{equation}\label{Weps}\quad i\partial _t u-|D|u=\e ^2|u|^2u, \; u(0,\cdot)=u_0\end{equation}
with $\Vert u_0\Vert _{H^s}=1 $.

\subsection{Study of the resonances. }
We write the Duhamel formula as 
$$u(t)={\rm e}^{-it\vert D\vert }\underline u(t)$$
with
$$\hat {\underline u}(t,k)=\hat u_0(k)-i\e^2\sum _{k_1-k_2+k_3-k=0}I(k_1,k_2,k_3,k),$$
where
$$I(k_1,k_2,k_3,k)=\int _0^t{\rm e}^{-i\tau \Phi (k_1,k_2,k_3,k)}\hat {\underline u}(\tau ,k_1)\overline {\hat {\underline u}(\tau ,k_2)}\hat {\underline u}(\tau ,k_3)\, d\tau\ ,$$
and $$\Phi (k_1,k_2,k_3,k_4):=\vert k_1\vert -\vert k_2\vert+\vert k_3\vert-\vert k_4\vert \ .$$

If $\Phi(k_1,k_2,k_3,k_4)\neq 0$, an integration by parts in $I(k_1,k_2,k_3,k_4)$ provides an extra factor $\e^2$, hence the set of $(k_1,k_2,k_3,k_4)$ such that $\Phi(k_1,k_2,k_3,k_4)=0$  is expected to play a crucial role in the analysis. This set is described in the following lemma.
\begin{lemma}
Given ${(k_1,k_2,k_3,k_4)\in \Z ^4}$, 
$${k_1-k_2+k_3-k_4=0\  {\rm and}\ \vert k_1\vert -\vert k_2\vert+\vert k_3\vert-\vert k_4\vert=0}$$
if and only if at least one of the following properties holds :
\begin{enumerate}
\item ${\forall j, k_j\ge 0\ .}$
\item ${\forall j, k_j\le 0\ .}$
\item ${k_1=k_2\ ,\ k_3=k_4\ .}$
\item ${k_1=k_4\ ,\ k_3=k_2\ .}$
\end{enumerate}
\end{lemma}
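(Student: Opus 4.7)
The reverse direction is immediate in all four cases: in (1) we have $|k_j|=k_j$ and in (2) we have $|k_j|=-k_j$, so the resonance identity collapses to the momentum identity; in (3) and (4) both sums telescope termwise.

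For the forward direction, my plan is to exploit the triangle inequality $|a|+|b|\ge|a+b|$, with equality iff $a,b$ share a sign (or one of them vanishes). The key observation is that rewriting $k_1-k_2+k_3-k_4=0$ as $k_1+k_3=k_2+k_4$ and $|k_1|-|k_2|+|k_3|-|k_4|=0$ as $|k_1|+|k_3|=|k_2|+|k_4|$ makes the problem symmetric under the swaps $1\leftrightarrow 3$ and $2\leftrightarrow 4$, which I will use freely.

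I will split into two main cases according to the signs within the pair $\{k_1,k_3\}$.

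\emph{Case A: $k_1$ and $k_3$ have the same sign (including the possibility that one is zero).} Say both are $\ge 0$ (the other sub-case is symmetric). Then $|k_1|+|k_3|=k_1+k_3=k_2+k_4$, while the resonance condition gives $|k_2|+|k_4|=|k_1|+|k_3|=k_2+k_4$. Equality in the triangle inequality forces $k_2,k_4\ge 0$, landing in case (1). The analogous argument with everything $\le 0$ gives case (2).

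\emph{Case B: $k_1$ and $k_3$ have strictly opposite signs.} Using the $1\leftrightarrow 3$ symmetry I may assume $k_1>0>k_3$, so $|k_1|+|k_3|=k_1-k_3$ and the resonance equation becomes $k_1-k_3=|k_2|+|k_4|$. Now I sub-case on the signs of $k_2,k_4$.
\begin{itemize}
\item If $k_2,k_4$ share a sign, say both $\ge 0$, then $|k_2|+|k_4|=k_2+k_4=k_1+k_3$, which combined with $k_1-k_3=|k_2|+|k_4|$ yields $k_3=0$, contradicting $k_3<0$; the case both $\le 0$ is symmetric and forces $k_1=0$.
\item If $k_2,k_4$ have strictly opposite signs, by the $2\leftrightarrow 4$ symmetry assume $k_2>0>k_4$. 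Then $|k_2|+|k_4|=k_2-k_4$, so together with $k_1+k_3=k_2+k_4$ I get the linear system $k_1-k_3=k_2-k_4$ and $k_1+k_3=k_2+k_4$, whence $k_1=k_2$ and $k_3=k_4$: case (3). The opposite choice $k_2<0<k_4$ yields $k_1=k_4$ and $k_3=k_2$: case (4).
\end{itemize}

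The only real obstacle is bookkeeping of signs and making sure the zero cases are absorbed by the non-strict inequalities in Case A; the symmetries $1\leftrightarrow 3$ and $2\leftrightarrow 4$ cut the case count in half and make the argument routine.
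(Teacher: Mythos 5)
Your proof is correct. The forward direction as you argue it is complete: the case split on the sign pattern of $(k_1,k_3)$ and then $(k_2,k_4)$ covers all possibilities (with zero values absorbed into the non-strict alternatives in Case~A), and each branch either forces all four $k_j$'s onto one side of zero or pins down the pairing $k_1=k_2$, $k_3=k_4$ (or $k_1=k_4$, $k_3=k_2$). One remark on the reverse direction: cases (1) and (2) alone do not imply the momentum identity $k_1-k_2+k_3-k_4=0$; you are implicitly reading that identity as a standing hypothesis, which is how the lemma is meant to be used in the paper (the convolution constraint is always in force), so this is fine but worth saying explicitly.

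Your route is genuinely different in its organization from the paper's, even though both are elementary sign analyses. The paper first assumes (WLOG, using the same index symmetries you invoke) that $k_1>0$, then \emph{subtracts} the resonance identity from the momentum identity to obtain
$$|k_3|-k_3=(|k_2|-k_2)+(|k_4|-k_4),$$
a single equation whose terms are each nonnegative; the case analysis then hinges on whether $k_3\ge 0$ (forcing both right-hand terms to vanish, hence all $k_j\ge 0$) or $k_3<0$ (forcing exactly one of $k_2,k_4$ to be negative, which yields case (3) or (4) after a short computation and rules out both negative since that would force $k_1=0$). You instead \emph{symmetrize} both identities into $k_1+k_3=k_2+k_4$ and $|k_1|+|k_3|=|k_2|+|k_4|$ and exploit equality in the triangle inequality. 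The paper's subtraction trick is a slightly slicker way to reduce to a one-variable sign discussion; your symmetric form makes the $1\leftrightarrow 3$ and $2\leftrightarrow 4$ symmetries completely transparent and makes the bookkeeping mechanical. Both are valid and of comparable length.
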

\begin{proof}
Consider ${(k_1,k_2,k_3,k_4)\in \Z ^4}$ such that  $k_1-k_2+k_3-k_4=0$, $\vert k_1\vert -\vert k_2\vert+\vert k_3\vert-\vert k_4\vert=0$, and the $k_j$'s are not all non negative or all non positive. Let us prove in that case that either $k_1=k_2$ and $k_3=k_4$,
or $k_1=k_4$ and $ k_3=k_2\ .$
Without loss of generality, we can assume that at least one of the $k_j$ is positive, for instance $k_1$. Then, substracting both equations, we get that $|k_3|-k_3=|k_2|-k_2+|k_4|-k_4$. If $k_3$ is non negative, then, necessarily both $k_2$ and $k_4$ are non negative and hence all the $k_j$'s are non negative. Assume now that $k_3$ is negative. At least one among $k_2$, $k_4$ is negative. If both of them are negative, then $k_3=k_2+k_4$ but this would imply $k_1=0$ which is impossible by assumption. So we get either that $k_3=k_2$ (and so $k_1=k_4$) or $k_3=k_4$ (and so $k_1=k_2$). This completes the proof of the lemma.
\end{proof}

\subsection{First reduction}
We get rid of the resonances corresponding to cases (3) and (4) by applying the transformation
 \begin{equation}
 \label{transformation}
 u(t)\mapsto {\rm e}^{2it\e ^2\Vert u_0\Vert _{L^2}^2}u(t)\end{equation} which,  since the $L^2$ norm of $u$ is conserved,  leads to the equation
 
\begin{equation}\label{(W')}\quad i\partial _t u-|D|u=\e ^2(\vert u\vert ^2-2\Vert u\Vert_{L^2}^2)u, \; u(0,\cdot)=u_0\ .
\end{equation}
Notice that this transformation does not change the $H^s$ norm.
The Hamiltonian function associated to the equation (\ref{(W')}) is given by
$$ H(u)=\frac 12(|D|u,u)+ \frac {\e ^2}4(\Vert u\Vert_{L^4}^4- 2\Vert u\Vert _{L^2}^4)=H_0(u)+\e ^2R(u)\ ,$$
where $$H_0(u):=\frac 12(|D|u,u)\ ,\ R(u):=\frac 14(\Vert u\Vert_{L^4}^4- 2\Vert u\Vert _{L^2}^4)=\frac 14\sum_{k_1-k_2+k_3-k_4=0 ,\atop k_1\neq k_2,k_4}u_{k_1}\overline{u_{k_2}}u_{k_3}\overline{u_{k_4}}.$$
\subsection{The Poincar{\'e}-Birkhoff normal form}
We claim that under a suitable canonical transformation on $u$, $H$ can be reduced  to the following Hamiltonian
$$\tilde H(u)=H_0(u)+\e ^2\tilde R(u)+O(\e ^4)$$
where 

$$\tilde R(u) =\frac 14\sum _{{\bf k}\in \mathcal R}u_{k_1}\overline{u_{k_2}} u_{k_3}\overline{u_{k_4}}$$
with
\begin{eqnarray*}\mathcal R&=&\{ {\bf k}=(k_1,k_2,k_3,k_4): k_1-k_2+k_3-k_4=0,\\ & & k_1\ne k_2, k_1\ne k_4,\quad\forall j,\  k_j\ge 0\ \; {\rm or}\  \; \forall j,\ k_j\le 0\} \ .
\end{eqnarray*}

We look for a canonical transformation as the value at time $1$ of some Hamiltonian flow. In other words, we look for a function $F$ such that  its  Hamiltonian vector field is  smooth on $H^s$  and on $B^1$ , so that our canonical transformation is $\varphi_1$, where $\varphi_\sigma $ is the solution of 
\begin{equation}\label{flot}
\frac d{d\sigma} \varphi_\sigma (u)=\e ^2\, X_F(\varphi_\sigma (u)),\; \varphi_0(u)=u.
\end{equation}
Recall that, given a smooth real valued function $F$, its 
Hamiltonian vector field $X_F$ is defined by
$$dF(u).h=: \omega (h, X_F(u))\ ,$$
and, given two functions $F, G$ admitting
Hamiltonian vector fields, the Poisson bracket of $F, G$ is defined by
$$\{ F,G\}(u)=\omega (X_F(u), X_G(u))\ .$$

Let us make some preliminary remarks about the Poisson brackets.

In view of the expression of $\omega $, we have
  $$\{F,G\}:=dG.X_F=\frac 2{i}\sum_k (\partial_{\overline k}F\partial_kG- \partial_{\overline k}G\partial_k F)$$ where $\partial_kF$ stands for $\frac{\partial F}{\partial u_k}$ and $\partial_{\overline k}F$ for  $\frac{\partial F}{\partial \overline{u_k}}$. In particular, if $F$ and $G$ are respectively homogeneous of order $p$ and $q$, then their Poisson bracket is homogeneous of order $p+q-2$.
\s
We prove the following lemma.

\begin{lemma}\label{X_F}
  Set $$F(u):=\sum_{k_1-k_2+k_3-k_4=0}f_{k_,k_2,k_3,k_4} u_{k_1}\overline{u_{k_2}}u_{k_3}\overline{u_{k_4}}\ ,$$
  where
  \begin{eqnarray*}f_{k_1,k_2,k_3,k_4}=\left\{\begin{array}{cc}
\frac {i}{4(|k_1|-|k_2|+|k_3|-|k_4|)} \text{ if }|k_1|-|k_2|+|k_3|-|k_4| \neq 0\ ,\\
0\text{ otherwise.}
\end{array}\right.
\end{eqnarray*}
Then  $X_F$ is smooth on $H^s, s>\frac 12,$ as well as  on $B^1$, and 
$$\{ F, H_0\} +R=\tilde R\ ,\ $$
$$\Vert DX_F(u)h\Vert\lesssim \Vert u\Vert ^2\Vert h\Vert\ ,$$ where the norm is taken either in $H^s, s>\frac 12,$ or in $B^1$.
\end{lemma}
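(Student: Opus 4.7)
The plan is to split the lemma into the algebraic identity $\{F, H_0\} + R = \tilde R$ and the analytic estimates on $X_F$ and $DX_F$.

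For the algebraic identity, I would compute the Poisson bracket directly from its definition. Since $H_0 = \tfrac12 \sum_k |k| |u_k|^2$, one has $\partial_{u_k}H_0 = \tfrac12 |k|\overline{u_k}$ and $\partial_{\overline{u_k}}H_0 = \tfrac12 |k|u_k$, while the coefficients $f_{k_1,k_2,k_3,k_4}$ are already symmetric in $(k_1,k_3)$ and in $(k_2,k_4)$ because $\Phi$ itself is. A short computation via the formula for $\{\cdot,\cdot\}$ then collapses to
\begin{equation*}
\{F, H_0\} = i \sum_{k_1 - k_2 + k_3 - k_4 = 0} \Phi(k_1,k_2,k_3,k_4)\, f_{k_1,k_2,k_3,k_4}\, u_{k_1}\overline{u_{k_2}}u_{k_3}\overline{u_{k_4}}.
\end{equation*}
From the explicit value of $f$, each non-resonant term contributes exactly $-\tfrac14 u_{k_1}\overline{u_{k_2}}u_{k_3}\overline{u_{k_4}}$ while resonant terms vanish. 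The classification in the preceding lemma identifies the zero set of $\Phi$ (within the momentum constraint) as the union of the two same-sign cones together with the trivial resonances $\{k_1=k_2,\,k_3=k_4\}$ and $\{k_1=k_4,\,k_3=k_2\}$, which are precisely the terms excluded from $R$ by the condition $k_1\neq k_2, k_4$. Hence $R+\{F,H_0\}$ reduces to the sum over same-sign cones with the trivial resonances removed, which is exactly $\tilde R$.

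For the analytic bounds, the key arithmetic observation is that $\Phi$ takes integer values, so $|\Phi|\geq 1$ on the support of $f$ and $|f_{k_1,k_2,k_3,k_4}|\leq \tfrac14$. Computing $X_{F,k}=-2i\,\overline{\partial_k F}$ from the symmetrized $F$ yields
\begin{equation*}
|X_F(u)_k|\ \lesssim\ \sum_{k_1+k_3-k_4=k}|u_{k_1}||u_{k_3}||u_{k_4}|,
\end{equation*}
and after relabeling this is the $k$th Fourier coefficient of $|v|^2v$, where $v(x):=\sum_k |u_k|\,e^{ikx}$. Since $\|v\|_{H^s}=\|u\|_{H^s}$, the algebra property of $H^s$ for $s>1/2$ gives $\|X_F(u)\|_{H^s}\lesssim \|u\|_{H^s}^3$. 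Differentiating the trilinear expression in $u$ and applying the same bound to each of the three bilinear-in-$u$, linear-in-$h$ pieces produces $\|DX_F(u)h\|_{H^s}\lesssim \|u\|_{H^s}^2\|h\|_{H^s}$. Smoothness then follows from the polynomial form of $X_F$.

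The main obstacle, I expect, is the corresponding $B^1$ bound, because the $B^1_{1,1}$ norm is not controlled by the moduli of the Fourier coefficients alone, so the direct comparison $|X_F(u)_k|\leq \widehat{|v|^2 v}(k)$ no longer transfers to the ambient norm. To handle this I would use the integral representation $\tfrac1\Phi=\tfrac{1}{2\pi i}\int_0^{2\pi}(\theta-\pi)e^{-i\theta\Phi}\,d\theta$ (for $\Phi\in\Z\setminus\{0\}$), which rewrites $X_F(u)$ as a $\theta$-average of pointwise products of the form $e^{i\theta|D|}\bigl(|e^{-i\theta|D|}u|^2\,e^{-i\theta|D|}u\bigr)$; combined with a Littlewood--Paley decomposition and the $B^1$ algebra property, this reduces the problem to controlling the half-wave propagator on $B^1$ uniformly in $\theta$, which one does by splitting $u=u_++u_-$ so that $e^{\pm i\theta|D|}u_\pm$ acts by mere translation. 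The derivative estimate $\|DX_F(u)h\|_{B^1}\lesssim \|u\|_{B^1}^2\|h\|_{B^1}$ then follows by the same scheme.
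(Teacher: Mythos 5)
Your algebraic verification of $\{F,H_0\}+R=\tilde R$ is essentially the same computation as the paper's and is correct (including the point that the ``trivial resonances'' $k_1=k_2$ or $k_1=k_4$ automatically force $\Phi=0$, so there is no mismatch between the cutoff $\Phi\neq 0$ in $F$ and the cutoff $k_1\neq k_2,k_4$ in $R$). The analytic part is where you diverge from the paper. The paper's route is to produce a closed-form expression $F(u)= \tfrac 12\,{\rm Im}\bigl( (D_0^{-1}u_-\vert |u_+|^2u_+)-(D_0^{-1}u_+\vert |u_-|^2u_-)-(D_0^{-1} |u_+|^2\vert |u_-|^2)\bigr)$ with $D_0^{-1}u=\sum_{k\neq 0}k^{-1}u_k\,{\rm e}^{ikx}$, and then read off smoothness and the operator bounds on $H^s$ and $B^1$ from boundedness of the elementary maps $\overline{\,\cdot\,}$, $D_0^{-1}$, $\Pi_\pm$ and the product. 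Your route is different: for $H^s$ you use only $|\Phi|\geq 1$, $|f|\leq 1/4$, and a positive-coefficient majorant, which exploits that the $H^s$ norm is monotone in the Fourier coefficients; and for $B^1$, where that fails, you invoke the integral representation $\tfrac 1\Phi=\tfrac{1}{2\pi i}\int_0^{2\pi}(\theta-\pi)\,{\rm e}^{-i\theta\Phi}\,d\theta$ (which also correctly returns $0$ when $\Phi=0$), reducing the estimate to the uniform $B^1$-boundedness of ${\rm e}^{\pm i\theta|D|}$, which you obtain by splitting into $u_\pm$ and using boundedness of $\Pi_\pm$ together with translation invariance of the $B^1_{1,1}$ norm. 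Both approaches are valid; in fact your $\theta$-integral, once the $u_\pm$-split propagators are written as translations, is precisely the integral kernel of the operator $D_0^{-1}$ sitting inside the paper's closed formula, so the two are really the same computation in disguise. What the paper's formula buys is a single clean statement that covers $H^s$ and $B^1$ at once; what your approach buys is that it avoids guessing the closed form and scales to any dispersion relation with integer gaps. One small point worth making explicit in your $B^1$ argument: after the cubic product the result is a general $B^1$ function, so you also need the \emph{outer} ${\rm e}^{i\theta|D|}$ to be $B^1$-bounded, which again follows by inserting $\Pi_\pm$ and translating; you should state this rather than only justifying boundedness of ${\rm e}^{\pm i\theta|D|}u_\pm$.
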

\begin{proof}
First we make a formal calculation with $F$ given by $$F(u):=\sum_{k_1-k_2+k_3-k_4=0}f_{k_1,k_2,k_3,k_4} u_{k_1}\overline{u_{k_2}}u_{k_3}\overline{u_{k_4}}$$
for some coefficients $f_{k_1,k_2,k_3,k_4}$ to be determined later.
We compute 
$$\{ F,H_0\} =\frac 1i\sum _{k_1-k_2+k_3-k_4=0}(-\vert k_1\vert +\vert k_2\vert -\vert k_3\vert +\vert k_4\vert )f_{k_1,k_2,k_3,k_4}u_{k_1}\overline{u_{k_2}}u_{k_3}\overline{u_{k_4}}\ $$
so that equality $\{ F,H_0\} +R=\tilde R$ requires 
 \begin{eqnarray*}f_{k_1,k_2,k_3,k_4}=\left\{\begin{array}{clcl}
&\frac {i}{4(|k_1|-|k_2|+|k_3|-|k_4|)} &\text{ if }|k_1|-|k_2|+|k_3|-|k_4| \neq 0\\
&0&\text{ otherwise.}
\end{array}\right.
\end{eqnarray*}

One can easily check that the function $F$ is explicitly given by 
$$F(u)= \frac 12 {\rm Im}\left( (D_0^{-1}u_-\vert |u_+|^2u_+)-(D_0^{-1}u_+\vert |u_-|^2u_-)-(D_0^{-1} |u_+|^2\vert |u_-|^2)\right)$$
where $D_0^{-1}$ is the operator defined by
$$D_0^{-1}u(x)=\sum_{k\neq 0}\frac{ u_k}k\, {\rm e}^{ikx}.$$
In view of the above formula, the Hamiltonian vector field $X_F(u)$ is a sum and products of terms involving the following maps
$f\mapsto \overline f$, $f\mapsto D_0^{-1}f$, $f\mapsto \Pi_\pm f$, $(f,g)\mapsto fg$. These maps are continuous on $H^s$ and on $B^1$. Hence, $X_F$ is smooth and its differential satisfies the claimed estimate on $H^s, s>\frac 12,$ and $B^1$.
\end{proof}
For further reference, we state the following technical lemma, which is based on straightforward calculations.
 \begin{lemma}\label{PoissonFR}
 The function $\tilde R$ and its Hamiltonian vector field  are given by 
  \begin{eqnarray*}
 \tilde R(u)=\frac 14 ( \Vert \tilde u_+\Vert _{L^4}^4+\Vert \tilde u_-\Vert  _{L^4}^4)+{\rm Re}((u\vert 1)\, ( u_-\vert  u_- ^2))-\frac 12(\Vert u_+\Vert _{L^2}^4 +\Vert u_-\Vert _{L^2}^4)\  ,\\
   iX_{\tilde R}(u)=\Pi _+(\vert u_+\vert ^2u_+)+\Pi _-(\vert u_-\vert ^2u_-)-2\Vert u_+\Vert ^2_{L^2}u_+ -2\Vert u_-\Vert _{L^2}^2u_-\hskip1.5cm
    \\ \qquad +(u_-\vert u_-^2)
  +2(1\vert u)\Pi _-(\vert u_-\vert ^2)+(1\vert u)u_-^2\ ,
  \end{eqnarray*}
    where we have set $u_\pm := \Pi _{\pm }(u)$.\\
The maps $X_{\{ F,R\} }$ and $X_{\{ F,\tilde R\} }$ are  smooth homogeneous polynomials of degree five on $B^1$ and on $H^s$ for every $s>\frac 12$.
\end{lemma}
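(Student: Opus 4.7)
The plan is to derive the explicit formulas for $\tilde R$ and $iX_{\tilde R}$ by a direct Fourier computation, and then to read off the smoothness of $X_{\{F,R\}}$ and $X_{\{F,\tilde R\}}$ from the structure of $X_F$ supplied by Lemma \ref{X_F}.

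To obtain the closed form of $\tilde R$, I would decompose the resonant set as $\mathcal R = \mathcal R_+ \cup \mathcal R_-$, where $\mathcal R_\pm$ collects the quadruples whose indices are all $\geq 0$ (resp. all $\leq 0$). These two components are disjoint, since the only overlap would be $(0,0,0,0)$, which is excluded by $k_1\neq k_2$. On $\mathcal R_+$ every mode comes from $\Pi_+ u$, and an inclusion--exclusion argument (removing the two diagonals $k_1=k_2$ and $k_1=k_4$ and adding back the total diagonal $k_1=k_2=k_3=k_4$) recasts that piece as $\frac 14 \Vert \tilde u_+\Vert_{L^4}^4 - \frac 12 \Vert u_+\Vert_{L^2}^4$, where $\tilde u_+$ denotes the projection onto strictly positive modes and serves to absorb the contribution of the zero mode. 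The sum over $\mathcal R_-$ is more delicate since $\Pi_-$ already excludes $k=0$, so whenever some $k_j$ vanishes the corresponding coefficient must be drawn from $(u|1)$. Classifying the quadruples of $\mathcal R_-$ by their number of zero entries (the configurations with three or four zero entries being forbidden by the constraints) produces the symmetric contribution $\frac 14 \Vert \tilde u_-\Vert_{L^4}^4 - \frac 12 \Vert u_-\Vert_{L^2}^4$ together with the cross term ${\rm Re}((u|1)(u_-|u_-^2))$. With the formula at hand, $iX_{\tilde R}$ follows from the defining relation $d\tilde R(u)h={\rm Im}(h|X_{\tilde R}(u))$ by differentiating term by term: the $L^4$ norms yield $\Pi_\pm(|u_\pm|^2u_\pm)$, the $L^2$ norms yield $2\Vert u_\pm\Vert_{L^2}^2 u_\pm$, and the cross term splits, according to whether $h$ varies in the zero mode, linearly in $u_-$, or quadratically in $\bar u_-$, into the three remaining contributions $(u_-|u_-^2)$, $(1|u) u_-^2$ and $2(1|u)\Pi_-(|u_-|^2)$.

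For the smoothness claim on the two Poisson bracket vector fields, I would use the identity $\{F,G\}(u) = dG(u)\cdot X_F(u)$. Both $R$ and $\tilde R$ are quartic polynomial functionals built only out of pointwise products of $u, \bar u, u_\pm$, the scalar $(u|1)$, and $L^2$ pairings; their differentials are therefore continuous cubic polynomial forms on $H^s$ for $s>\frac 12$ and on $B^1$, using that each of these spaces is a Banach algebra. Combining with the smooth cubic polynomial $X_F$ provided by Lemma \ref{X_F}, one obtains that $\{F,R\}$ and $\{F,\tilde R\}$ are smooth sextic polynomials, and passing to the Hamiltonian vector field through the identity ${\rm Im}(h|X_{\{F,\cdot\}})=d\{F,\cdot\}(u)h$ yields smooth quintic polynomial vector fields, since the resulting expressions only involve the projections $\Pi_\pm$, complex conjugation, the smoothing operator $D_0^{-1}$, $L^2$ pairings, and pointwise multiplication---all continuous on $B^1$ and on $H^s$ for $s>\frac 12$. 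The main obstacle I expect is the combinatorial bookkeeping in the second paragraph: correctly counting, in each configuration of $\mathcal R_-$, the number of zero indices and verifying that the surviving cross contribution is exactly ${\rm Re}((u|1)(u_-|u_-^2))$ with the stated coefficient; once this identification is performed, the computation of $iX_{\tilde R}$ and the smoothness discussion are essentially mechanical.
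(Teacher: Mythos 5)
Your approach is the same one the paper implicitly invokes (it calls this lemma ``straightforward calculations'' and gives no proof): split $\mathcal R$ into the all-nonnegative and all-nonpositive sublattices, compute each piece via inclusion--exclusion over the two diagonals, separate out the quadruples involving the zero mode (which on the nonpositive side generate the cross term ${\rm Re}((u|1)(u_-|u_-^2))$), differentiate to read off $iX_{\tilde R}$, and observe that $\{F,\cdot\}=d(\cdot)\cdot X_F$ is a polynomial functional so that $X_{\{F,R\}}$, $X_{\{F,\tilde R\}}$ are homogeneous quintic vector fields built from $\Pi_\pm$, conjugation, $D_0^{-1}$, pointwise products and $L^2$ pairings, all bounded on $H^s$ ($s>\frac 12$) and $B^1$. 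A slightly cleaner route to the last claim is to use the Lie-bracket identity $X_{\{F,G\}}=DX_G\cdot X_F - DX_F\cdot X_G$ directly, which lands you on a quintic vector field without passing through the sextic scalar functional, but your detour through $d\{F,\cdot\}$ is equally valid. One small caution: the meaning of $\tilde u_\pm$ in the lemma's display is not defined by the paper, and your interpretation (strictly positive modes) does not exactly reconcile the Fourier sum over $\mathcal R_+$ with the displayed closed form---the inclusion--exclusion over $k_1\ne k_2$ and $k_1\ne k_4$ leaves an extra diagonal piece $\frac 14\sum_{k}|u_k|^4$ that cannot be absorbed by simply dropping the zero mode from the $L^4$ norm; this is an imprecision already present in the paper's representation of $R$ and does not affect the conclusions, but you should keep track of it rather than try to make the claimed closed form match exactly. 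With that caveat, the proposal is sound.
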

We now perform the canonical transformation
$$\chi_\e :={\rm exp}(\e ^2X_F).$$

\begin{lemma}\label{phi_sigma}
Set $\varphi_\sigma:={\rm exp}(\e^2\sigma X_F)$ for $-1\le \sigma \le 1$. There exist $m_0>0$ and $C_0>0$ so that, for any $u\in B^1$ so that $\e\Vert u\Vert_{B^1}\le m_0$, $\varphi _\sigma (u)$ is well defined for $\sigma \in [-1,1]$ and 
$$\Vert \varphi_\sigma(u)\Vert_{B^1}\le \frac 32\Vert u\Vert_{B^1}$$
$$\Vert \varphi_\sigma(u)-u\Vert_{B^1}\le C_0\e^2\Vert u\Vert_{B^1}^3$$
$$\Vert D\varphi_\sigma(u)\Vert_{B^1\to B^1}\le e^{C_0\e^2\Vert u\Vert_{B^1}^2}$$
Moreover, the same estimates hold in $H^s, s>\frac 12, $ with some constants $m(s)$ and $C(s)$.
\end{lemma}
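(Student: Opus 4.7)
The plan is to treat this as a standard ODE on the Banach space $B^1$ (respectively $H^s$), using crucially the cubic homogeneity of $X_F$ and the bounds provided by Lemma \ref{X_F}: namely $\|X_F(u)\|\lesssim \|u\|^3$ (obtained by integrating the differential estimate from $0$) and $\|DX_F(u)h\|\lesssim \|u\|^2\|h\|$, both in $B^1$ and in $H^s$ for $s>1/2$.

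First I would invoke the Cauchy--Lipschitz theorem: since $X_F$ is smooth with locally Lipschitz bounds, $\varphi_\sigma(u)$ exists locally in $\sigma$ for any $u\in B^1$, and the maximal time of existence is only limited by blow-up of the $B^1$ norm. The rest is a bootstrap argument. Assume that on some interval $[0,\sigma_*]$ one has $\|\varphi_\sigma(u)\|_{B^1}\le \tfrac{3}{2}\|u\|_{B^1}$. Then the ODE gives
\begin{equation*}
\frac{d}{d\sigma}\|\varphi_\sigma(u)\|_{B^1}\le \e^2\|X_F(\varphi_\sigma(u))\|_{B^1}\le C\e^2\|\varphi_\sigma(u)\|_{B^1}^3\le C\bigl(\tfrac{3}{2}\bigr)^3\e^2\|u\|_{B^1}^3,
\end{equation*}
so integrating over $\sigma\in[0,1]$ yields $\|\varphi_\sigma(u)\|_{B^1}\le \|u\|_{B^1}\bigl(1+C'\e^2\|u\|_{B^1}^2\bigr)$. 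Choosing $m_0$ small enough that $C'm_0^{2}\le \tfrac{1}{4}$ (so that under the hypothesis $\e\|u\|_{B^1}\le m_0$ the right hand side is at most $\tfrac{5}{4}\|u\|_{B^1}$), the bootstrap closes, which simultaneously rules out blow-up on $[0,1]$ and gives the first bound. The case $\sigma\in[-1,0]$ is symmetric.

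The second estimate is then immediate from the integral form
\begin{equation*}
\varphi_\sigma(u)-u=\e^2\int_0^\sigma X_F(\varphi_\tau(u))\,d\tau,
\end{equation*}
combined with $\|X_F(\varphi_\tau(u))\|_{B^1}\lesssim \|\varphi_\tau(u)\|_{B^1}^3\lesssim \|u\|_{B^1}^3$. For the third estimate, differentiating the ODE in the initial datum and writing $\psi_\sigma:=D\varphi_\sigma(u)\in \mathcal{L}(B^1)$ gives
\begin{equation*}
\frac{d}{d\sigma}\psi_\sigma=\e^2\, DX_F(\varphi_\sigma(u))\,\psi_\sigma,\qquad \psi_0=I,
\end{equation*}
so Gronwall's lemma combined with $\|DX_F(\varphi_\sigma(u))\|_{B^1\to B^1}\le C\|\varphi_\sigma(u)\|_{B^1}^2\le \tfrac{9C}{4}\|u\|_{B^1}^2$ yields the exponential bound with a suitable $C_0$.

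Finally, the identical argument transfers verbatim to $H^s$ for $s>\tfrac12$ because Lemma \ref{X_F} furnishes the analogous cubic bound on $X_F$ and quadratic bound on $DX_F$ in that norm; one just relabels the constants $m_0, C_0$ as $m(s), C(s)$. I do not anticipate any genuine obstacle: all the work has already been done in Lemma \ref{X_F}, and what remains is a textbook application of Cauchy--Lipschitz plus a bootstrap/Gronwall argument, the only subtlety being to fix $m_0$ small enough (once and for all) so that the bootstrap constant $(3/2)$ is preserved over the full unit time interval of the flow.
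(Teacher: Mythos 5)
Your proof is correct and takes essentially the same route as the paper: a bootstrap/continuity argument on $\sup_{|\sigma|\le\tau}\|\varphi_\sigma(u)\|$ driven by the cubic bound on $X_F$ from Lemma \ref{X_F}, followed by the integral formula for the second estimate and Gronwall for the differential $D\varphi_\sigma$. The only cosmetic difference is that the paper packages the bootstrap step into an abstract lemma on the inequality $M\le a+bM^3$ (giving the clean $\tfrac32 a$ bound via the function $z\mapsto z-bz^3$), whereas you run the bootstrap by hand with the working hypothesis $\|\varphi_\sigma(u)\|\le\tfrac32\|u\|$ improved to $\tfrac54\|u\|$; both are equivalent.
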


\begin{proof}
Write $\varphi_\sigma$ as the integral of its derivative and use Lemma \ref{X_F} to get
\begin{equation}\label{EquPhi_sigma}\sup_{|\sigma|\le \tau }\Vert \varphi_\sigma (u)\Vert_{B^1}\le \Vert u\Vert_{B^1}+C\e^2 \sup_{|\sigma|\le \tau }\Vert \varphi_\sigma (u)\Vert_{B^1}^3\ ,\ 0\le \tau \leq 1
\end{equation}
We now use the following standard bootstrap lemma.
\begin{lemma}\label{bootstrap}
Let $a,b, T>0$ and $\tau \in [0,T] \mapsto M(\tau )\in \R _+$ be a continuous function satisfying
$$\forall \tau \in [0,T], M(\tau )\leq a+bM(\tau )^3\ .$$
Assume 
$$\sqrt {3b}\, M(0)<1\ ,\ \sqrt{3b}\, a<\frac 23\ .$$
Then
$$\forall \tau \in [0,T]\ ,\  M(\tau )\le \frac 32 a\ .$$
\end{lemma}
\begin{proof}For the convenience of the reader, we give the proof of Lemma \ref{bootstrap}. The function $f:z\ge 0\mapsto z-bz^3$ attains its  maximum at $z_c=\frac 1{\sqrt{3b}}$, equal to $f_m=\frac 2{3\sqrt {3b}}$. 
Consequently, since $a$ is smaller than $f_m$ by the second inequality, 
$$\{ z\ge 0 : f(z)\le a\} =[0, z_-]\cup [z_+,+\infty )$$ 
with $z_-<z_c<z_+$ and $f(z_-)=a$. Since $M(\tau )$ belongs to this set for every $\tau $ and since $M(0)$ belongs to the first interval by the first inequality, we conclude by continuity 
that $M(\tau )\le z_-$ for every $\tau $. By concavity of $f$, $f(z)\ge \frac 23 z$ for $z\in [0,z_c]$, hence $z_-\le \frac 32 a$.
\end{proof}
Let us come back to the proof of Lemma \ref{phi_sigma}. If $\varepsilon \Vert u\Vert_{B^1}< \frac 2{3\sqrt {3C}}$,   equation (\ref{EquPhi_sigma}) and Lemma \ref{bootstrap} imply  that
\begin{equation}\label{bornephi}
\sup_{|\sigma|\le 1}\Vert \varphi_\sigma (u)\Vert_{B^1}\le \frac 32\Vert u\Vert_{B^1}\ ,
\end{equation}
which is  the first estimate.
For the second one, we write for $|\sigma|\le 1$, 
$$\Vert \varphi_\sigma(u)-u\Vert_{B^1}=\Vert \varphi_\sigma(u)-\varphi_0(u)\Vert_{B^1}\le |\sigma|\sup_{|s|\le |\sigma|}\left\Vert \frac d{ds}\varphi_s(u)\right\Vert_{B^1}\le C_0 \e^2\Vert u\Vert_{B^1}^3,$$
where the last inequality comes from Lemma \ref{X_F} and estimate (\ref{bornephi}). 
\s
It remains to prove the last estimate. We differentiate the equation satisfied by $\varphi_\sigma$ and use again Lemma \ref{X_F} to obtain 
\begin{eqnarray*}\Vert D\varphi_\sigma(u)\Vert_{B^1\to B^1}&\le&1+\e^2\left|\int_0^{\sigma}\Vert DX_F(\varphi_\tau(u))\Vert_{B^1\to B^1} \Vert D\varphi_{\tau}(u)\Vert_{B^1\to B^1}\, d\tau\right|\\
&\le&1+C_0\e^2\Vert u\Vert_{B^1}^2\left|\int_0^{\sigma}\Vert D\varphi_{\tau}(u)\Vert_{B^1\to B^1}d\tau\right |,
\end{eqnarray*}
and Gronwall's lemma yields  the result.
Analogous proofs give the estimates in $H^s$.
\end{proof}

Let $u$ satisfy the assumption of  Lemma \ref{phi_sigma} in $B^1$ or in $H^s$ for some $s>\frac 12$.

Let us compute $H\circ \chi_\e=H\circ \varphi _1$ as the Taylor expansion of $H\circ \varphi _\sigma $ at time $1$ around $0$. One gets
\begin{eqnarray*}
H\circ\chi_\e&=&H\circ\varphi _1=H_0\circ \varphi _1+\e ^2R\circ \varphi _1\\
&=&H_0+\frac d{d\sigma }[H_0\circ\varphi _\sigma ]_{\sigma =0}+\e ^2R+\\
&+& \int _0^1\left ((1-\sigma )\frac { d^2}{d\sigma ^2}[H_0\circ\varphi _\sigma ]+\e ^2\frac d{d\sigma}[R\circ \varphi _\sigma ]\right )\, d\sigma \\
&=&H_0+\e ^2(\{F,H_0\}+R)+\e ^4\int _0^1\left ((1-\sigma )\{F,\{F, H_0\}\} +\{ F,R\} \right ) \circ \varphi _\sigma \, d\sigma \\
&=&H_0+\e ^2\tilde R+\e ^4\int _0^1\left ((1-\sigma )\{F,\tilde R\} +\sigma\{ F,R\} \right ) \circ \varphi _\sigma \, d\sigma \\
&:=&H_0+\e ^2\tilde R+\e ^4\int _0^1G(\sigma )\circ \varphi _\sigma \, d\sigma \ .
\end{eqnarray*}
By Lemma \ref{PoissonFR},  one gets
$$\sup _{0\le \sigma \le 1}\Vert X_{G(\sigma )}(w)\Vert \le C\Vert w\Vert ^5$$
where the norm stands for the $B^1$ norm or the $H^s$ norm. Since
$$X_{G(\sigma )\circ \varphi _\sigma }(u)=D\varphi _{-\sigma }(\varphi _\sigma (u)).X_{G(\sigma )}(\varphi _\sigma (u))\ ,$$
we conclude from Lemma \ref{phi_sigma}  that, if $\e \Vert u\Vert _{B^1}\le m_0$, 
$$\Vert X_{G(\sigma )\circ \varphi _\sigma }(u)\Vert _{B^1}\leq C\Vert u\Vert _{B^1}^5\ .$$
As a consequence, one can write 
$$X_{H\circ \chi_\e}=X_{H_0}+\e ^2X_{\tilde R}+\e ^4Y\ ,$$
where, if $\e \Vert u\Vert _{B^1}\le m_0$, 
then $$\Vert Y(u)\Vert _{B^1}\lesssim \Vert u\Vert ^5_{B^1}Ê\ .$$
An analogous estimate holds in $H^s$, $s>\frac 12$.

\s
\subsection{End of the proof} 
We first deal with the $B^1$-norm of $u$, solution of equation (\ref{(W')}). We are going to prove that $\Vert  u(t)\Vert_{B^1}=\mathcal O(1)$ for $t<\!\!<\frac 1{\e^3}$ by the following  bootstrap argument.
We assume that for some $K$ large enough with respect to $\Vert u_0\Vert _{B^1}$, for some $T>0$,  for all $t\in [0,T]$, $\Vert  u(t)\Vert _{B^1}\le 10 K$, and we prove that if $T<\!\!<\frac 1{\e^3}$,  $\Vert u(t)\Vert _{B^1}\le K$ for $t\in [0,T]$. 
This will prove the result by continuity.
 \s
 Set, for $t\in [0,T]$, 
 $$\tilde u(t):=\chi_\e ^{-1}(u(t))\ ,$$
 so that $\tilde u$ is solution of
 $$i\partial _t\tilde u-\vert D\vert \tilde u=\e ^2X_{\tilde R}(\tilde u)+\e ^4Y(\tilde u)\ .$$
 Moreover, by  Lemma \ref{phi_sigma},
 $$\Vert \tilde u(t)-u(t)\Vert _{B^1}\lesssim \e^2\Vert u\Vert_{B^1}^3$$ and so by the hypothesis, $\Vert \tilde u(t)\Vert _{B^1}\le 11K$ if $\e $ is small enough.
In view of the expression of the Hamiltonian vector field of $\tilde R$ in Lemma \ref{PoissonFR}, the equation  
 for $\tilde u$ reads
$$\left\{\begin{array}{ccl}
i\partial _t\tilde u_+-D\tilde u_+&=&\e ^2\left (\Pi_+ (\vert \tilde u_+\vert ^2\tilde u_+)-2\Vert \tilde u_+\Vert _{L^2}^2\tilde u_++ \int _{\T}\vert \tilde u_-\vert ^2\tilde u_-\right )\\
&+&\e ^4Y_+(\tilde u)\ ,\\
 \\
i\partial _t\tilde u_-+D\tilde u_-&=&\e ^2\left (\Pi_- (\vert \tilde u_-\vert ^2\tilde u_-)-2\Vert \tilde u_-\Vert _{L^2}^2\tilde u_- +2(1\vert \tilde u)\Pi _-(\vert \tilde u_-\vert ^2)+(1\vert \tilde u)\tilde u_-^2\right )\\
&+&\e ^4Y_-(\tilde u)\  .\end{array}\right.
$$
 Notice that all the Hamiltonian functions we have dealt with so far are invariant by multiplication by complex numbers of modulus $1$, hence their Hamiltonian vector fields satisfy
 $$X({\rm e}^{i\theta}z)={\rm e}^{i\theta }z\ ,$$
 so that the corresponding Hamiltonian flows conserve the $L^2$ norm. Hence $\tilde u$ has the same $L^2$ norm as $u$, which is the $L^2$ norm of $u_0$. In particular, $\vert (1\vert \tilde u)\vert \le \Vert  u_0\Vert_{L^2}$.

 Moreover, as $\Vert u_0\Vert_{B^1} \lesssim \Vert u_0\Vert_{H^s}=\mathcal O(1)$ since $s>1$, $\tilde u_0$ satisfies
 $$\Vert \tilde u_0-u_0\Vert _{B^1}\lesssim \e ^2\ $$ by Lemma \ref{phi_sigma} so that, as
 $u_{0-}=0$, we get $\Vert\tilde {u_0}_-\Vert_{B^1}= \mathcal O(\e^2)$.
 Then we obtain from the second equation
 $$\sup _{0\le \tau \le t}\Vert \tilde u_-(\tau )\Vert _{B^1}\lesssim \e ^2+ \e ^2t(\sup _{0\le \tau \le t}\Vert \tilde u_-(\tau )\Vert _{B^1}^3+\sup _{0\le \tau \le t}\Vert \tilde u_-(\tau )\Vert _{B^1}^2)+\e ^4 tK^5\ .$$
 
 Let $M(t)=\frac 1\e\sup _{0\le \tau \le t}\Vert \tilde u_-(\tau )\Vert _{B^1}$ so that, if $t\le T$, $$M(t)\lesssim \e+\e^3 TM(t)^2(1+\e M(t))+\e^3T.$$
 As $3m^2\le 1+2m^3$ for any $m\ge 0$, we get
 $$M(t)\lesssim \e+\e^3TM(t)^3+\e^3 T.$$
 Using Lemma \ref{bootstrap}, we conclude that, if $T<\!\!<\frac 1{\e^3}$, 
  $$\sup _{0\le \tau \le T}\Vert \tilde u_-(\tau )\Vert _{B^1}<\!\!< \e .$$
For further reference, notice that, if $T\lesssim \frac 1{\e ^2}\log \frac 1{\e }$, this estimate can be improved as
 $$\sup _{0\le \tau \le T}\Vert \tilde u_-(\tau )\Vert _{B^1}\lesssim \e ^{2-\alpha }\ ,\  \forall \alpha >0.$$

We come back to the case $T<\!\!<\frac 1{\e^3}$. From the estimate on $\tilde u_-$, we infer
 $$\Vert \tilde u_+\Vert _{L^2}^2=\Vert \tilde u\Vert _{L^2}^2+ O(\e^2 )=\Vert u_0\Vert _{L^2}^2+ O(\e^2 )\ ,$$
  and the equation for $\tilde u_+$ reads 
  $$i\partial _t\tilde u_+-D\tilde u_+=\e ^2\left (\Pi_+ (\vert \tilde u_+\vert ^2\tilde u_+)-2\Vert u_0\Vert _{L^2}^2\tilde u_+\right )+\e ^4Y_+(\tilde u)+\mathcal O(\e ^5)+\mathcal O(\e ^4)\tilde u_+\ .$$
 Since $\tilde{u_0}_+$ is not small in $B^1$, we have to use a different strategy to estimate $\tilde u_+$. We  use the complete integrability of the cubic Szeg\"o equation, especially its Lax pair and the conservation of the $B^1$-norm.  
 \s
  At this stage it is of course convenient to cancel the linear term $\Vert u_0\Vert _{L^2}^2 \tilde u_+$ by multiplying $\tilde u_+(t)$ by ${\rm e}^{2i\e ^2t\Vert u_0\Vert _{L^2}^2}$. As pointed out before, this change of unknown is completely transparent to the above system. This leads to
 $$ i\partial _t\tilde u_+-D\tilde u_+=\e ^2\Pi_+ (\vert \tilde u_+\vert ^2\tilde u_+)+\e ^4Y_+(\tilde u)+\mathcal O(\e ^5)+ \mathcal O(\e ^4)\tilde u_+\ .$$

We now appeal to the results recalled in section \ref{LaxPair}. We introduce
the unitary family $U(t)$ defined by
$$i\partial_t U-DU=\e ^2(T_{\vert \tilde u_+\vert ^2}-\frac 12H_{\tilde u_+}^2)U\ ,\ U(0)=I,$$
so that, using formula (\ref{laxpair}),
$$i\partial _t(U(t)^*H_{\tilde u_+(t)}U(t))=\e ^{4}U(t)^*H_{Y_+(\tilde u)+\mathcal O(\e)+\mathcal O(1)\tilde u_+}U(t)\ .$$
 Then, we use Peller's theorem \cite{Pe} which states, as recalled in section \ref{LaxPair}, that the trace norm of a Hankel operator of symbol $b$ is equivalent to the $B^1$-norm of $b$ to obtain 
 \begin{eqnarray*}
 \Vert {\tilde u_+(t)}\Vert_{B^1}&\simeq&{\rm Tr}  |H_{\tilde u_+(t)}|\\
 &\lesssim &{\rm Tr}   |H_{\tilde {u}_{0+}}|+\e ^4\int_0^t ({\rm Tr} |H_{Y_+(\tilde u)}(\tau)|+{\rm Tr } |H_{\tilde u_+}(\tau )|+\e )\, d\tau
\\
&\lesssim &  \Vert {\tilde{ u}_{0+}}\Vert_{B^1}+\e ^4\int_0^t (\Vert \tilde u(\tau)\Vert_{B^1}^5 + \Vert \tilde u_+(\tau )\Vert _{B^1}+\e )\, d\tau
\end{eqnarray*}
 so that as $\Vert \tilde u(t)\Vert_{B^1}\le 11K$,
 $$\Vert \tilde u_+(t)\Vert _{B^1}\lesssim \Vert {\tilde{ u_0}_+}\Vert_{B^1}+\e^4t(11K)^5\ ,$$
 and, if $t<\!\!<\frac 1{\e^3}$ and $\e $ is small enough,
 $$\Vert \tilde u(t)\Vert _{B^1}\lesssim \Vert {\tilde{ u_0}_+}\Vert_{B^1}\ .$$
 Using again the second estimate in Lemma \ref{phi_sigma}, we infer
 $$\Vert u(t)\Vert _{B^1}\leq K\ .$$
 Finally, using the inverse of transformation (\ref{transformation}) and multiplying $u$ by $\e$, we obtain estimate (\ref{estimateLinfty}) of Theorem \ref{Approx}.
\s
We now estimate the difference between the solution of the wave equation and the solution of the cubic Szeg\"o equation. Since we have applied transformation (\ref{transformation}), we have to compare  in $B^1$ the solution $u$ of equation (\ref{(W')}) to the solution $v$ of equation

$$i\partial _tv-Dv=\e ^2(\Pi _+(\vert v\vert ^2v)-2\Vert u_0\Vert_{L^2}^2v)\ ,\ v(0)=u _0\ .$$
Notice that, as $u_0$ is bounded in $H^s$, $s>1$,  and as the $B^1$ norm is conserved by the cubic Szeg\"o flow, 
$$\Vert v(t)\Vert_{B^1}\simeq \Vert u_0\Vert_{B^1}\lesssim \Vert u_0\Vert_{H^s}=\mathcal O(1).$$
We shall prove that, for every $\alpha >0$, there exists $c_\alpha >0$ such that, 
$$\forall t\leq \frac{c_\alpha }{\e ^2}\log \frac 1\e \ ,\ \Vert u(t)-v(t)\Vert _{B^1}\leq \e ^{2-\alpha }\ .$$
In view of the previous estimates, it is enough to prove that, on the same time interval,
$$\Vert \tilde u_+(t)-v(t)\Vert _{B^1}\leq \e ^{2-\alpha }\ ,$$
where $\tilde u_+$ satisfies
  \begin{equation}\label{eqtildeu+}
  \left\{\begin{array}{rcl}
  i\partial _t\tilde u_+-D\tilde u_+&=&\e ^2\left (\Pi_+ (\vert \tilde u_+\vert ^2\tilde u_+)-2\Vert u_0\Vert _{L^2}^2\tilde u_+\right )+\mathcal O(\e^4)\ ,\\
   \\
  \tilde u_+(0)&=&\tilde u_{0,+}\ .
  \end{array}\right.
  \end{equation}
As
 $\Vert \tilde u(t)\Vert _{B^1}\lesssim 1$, $\Vert v(t)\Vert _{B^1}\lesssim 1$, $\Vert \tilde u_{0,+}-u_0\Vert_{B^1}\le \e^2\Vert u_0\Vert_{B^1}\lesssim \e^2$
 and $$(i\partial_t-D)(\tilde u_+-v)=\e^2\Pi_+(|\tilde u_+|^2\tilde u_+-|v|^2v-2\Vert u_0\Vert_{L^2}^2(\tilde u_+-v))+\mathcal O(\e^4)\ ,$$
 we get, using that $B^1$ is an algebra on which $\Pi_+ $ acts, 
$$\Vert \tilde u_+(t)-v(t)\Vert _{B^1}\lesssim \e ^2+\e ^{4}t+\e ^2\int _0^t \Vert \tilde u_+(\tau )-v(\tau )\Vert _{B^1}\, d\tau \ .$$
This yields
$$\Vert \tilde u_+(t)-v(t)\Vert _{B^1}\lesssim (\e ^2+\e ^{4}t){\rm e}^{\e ^2t}\ ,$$
hence, for $t\le \frac {c_\alpha }{\e ^2}\log \frac 1\e $,
$$\Vert \tilde u_+(t)-v(t)\Vert _{B^1}\le \e ^{2-\alpha}\ .$$
\s
We now turn to the estimates in $H^s$ for $s>1$. 
\s
From the equation on $v$ and the a priori estimate in $B^1$, 
it follows that $\Vert v(t)\Vert _{H^s}\le Ae^{A\e^2t}$, $t>0$, so that $\Vert v(t)\Vert_{H^s}\le N(\e)$ for $t\le \frac {c}{\e^2}\log(\frac 1\e)$, $0<c<\!\!<1$ where $N(\e):=A\e^{-cA}$.
\s
Let us assume that for some $T>0$,
 $$\forall t\in [0,T], \Vert u(t)\Vert _{H^s}\le 10 N(\e)\, \ .$$
 We are going to prove that, for every $\alpha >0$, there exists $c_\alpha>0$ such that, if 
$$T\le \frac{c_\alpha}{\e ^2}\log \frac 1\e \ ,$$
then 
$$\forall t\in [0,T], \Vert u(t)-v(t)\Vert _{H^s}\le \e ^{2-\alpha }\ ,$$
Since $\Vert v(t)\Vert _{H^s}\le N(\e)$ for $t\le \frac{c}{\e ^2}\log \frac 1\e \ ,$ this will prove the result 
by a bootstrap argument. 
 \s
 As before, we perform the same canonical transformation
 $$\tilde u(t):=\chi_\e ^{-1}(u(t))\ ,$$
 to get the solution of
 $$i\partial _t\tilde u-\vert D\vert \tilde u=\e ^2X_{\tilde R}(\tilde u)+\e ^4Y(\tilde u)\ .$$
By Lemma \ref{phi_sigma},
 $$\Vert \tilde u(t)-u(t)\Vert _{H^s}\lesssim \e ^2N(\e )^3$$ and so $\Vert \tilde u(t)\Vert _{H^s}\lesssim  N(\e ).$ Therefore it suffices to prove that $$\forall t\in [0,T], \Vert \tilde u(t)-v(t)\Vert _{H^s}\le \e ^{2-\alpha }\ .$$
 \s
 We first deal with $\tilde u_-$. A similar argument as the one developed in $B^1$ gives that for, for $0\le t\lesssim \frac 1{\e ^2}\log \frac 1\e $,
 $$\sup _{0\le \tau \le t}\Vert \tilde u_-(\tau )\Vert _{H^s}\le C_\alpha  \e ^{2-\alpha }$$
for every $\alpha >0$.  
 
It remains to estimate the $H^s$ norm of $\tilde u_+-v$. Notice that $$\Vert \tilde u_{0,+}-u_0\Vert_{H^s}\le \e^2$$ by Lemma \ref{phi_sigma}. We use the following inequality --- recall that $B^1\subset L^\infty $ ,
\begin{eqnarray*}
\Vert \Pi _+(\vert u\vert ^2u-\vert v\vert ^2v)\Vert _{H^s}&\lesssim &(\Vert u\Vert _{B^1}^2+\Vert v\Vert _{B^1}^2)\Vert u-v\Vert _{H^s}+\\
&+&(\Vert v\Vert _{H^s}+\Vert u-v\Vert _{H^s})(\Vert u\Vert _{B^1}+\Vert v\Vert _{B^1})\Vert u-v\Vert _{B^1}\ .
\end{eqnarray*}
 Plugging this into a Gronwall inequality, in view of  the previous estimates, we finally get
 $$\Vert \tilde u_+(t)-v(t)\Vert _{H^s}\le \e ^{2-\alpha}$$
 for $t\le \frac {c_\alpha }{\e ^2}\log \frac 1\e $. This completes the proof.
 
 \section{Appendix: a necessary condition for wellposedness}
 
 In this section, we justify that the boundedness in $H^s$ of the first iteration map of the Duhamel formula 
 $$F(t)=e^{-it|D|}f-i\int_0^t e^{-i(t-\tau)|D|} (|F(\tau)|^2F(\tau)) d\tau$$
implies $$\int_0^1\Vert e^{-it|D|}f\Vert_{L^4(\T)}^4 dt\lesssim \Vert f\Vert_{H^{s/2}}^4.$$
 
Indeed, assume the following inequality
$$\left \Vert \int_0^1 e^{-i(1-\tau)|D|} (|e^{-i\tau|D|}f|^2 e^{-i\tau|D|}f)d\tau\right\Vert _{H^s}\lesssim \Vert f\Vert_{H^s}^3.$$
 We compute the scalar product of the expression  in the left hand side with $e^{-i |D|}f$ and we get 
 $$\int_0^1\Vert e^{-i\tau |D|}f\Vert_{L^4}^4 d\tau \lesssim \Vert f\Vert_{H^s}^3\Vert f\Vert_{H^{-s}}.$$
 If we assume first that $f$ is spectrally supported, that is if $f=\Delta_N f$ for some $N$, then
 $\Vert f\Vert_{H^{\pm s}}\simeq N^{\pm s}\Vert f\Vert_{L^2}$ and the preceding inequality reads
 $$\int_0^1\Vert e^{-i\tau|D|}f\Vert_{L^4}^4 d\tau\lesssim N^{2s} \Vert f\Vert_{L^2}^4.$$
 
 Eventually, for general $f=\sum_{N}\Delta_N(f)$, we used the Littlewood-Paley estimate
 $$\Vert g\Vert_{L^4}^4\lesssim \sum_{N}\Vert \Delta_Ng\Vert^4_{L^4}$$
 to  get
 $$\int_0^1\Vert e^{-i\tau |D|}f\Vert_{L^4}^4 d\tau\lesssim  \Vert f\Vert_{H^{s/2}}^4.$$

\end{document}